\newtheorem{theorem}{Theorem}
\newtheorem{assumption}{Assumption}
\newtheorem{algorithm}{Algorithm}
\newtheorem{corollary}{Corollary}
\newtheorem{lemma}{Lemma}
\newdefinition{definition}{Definition}
\newtheorem{proposition}{Proposition}
\newenvironment{proof}[1][Proof]{\noindent\textbf{#1.} }{\ \rule{0.5em}{0.5em}}
\renewenvironment{frontmatter}{}{}
\begin{document}

\begin{frontmatter}
\title{Adaptive BDDC in Three Dimensions}
\author[ucd]{Jan Mandel}
\ead{jan.mandel@ucdenver.edu}
\author[ucd,ut]{Bed\v{r}ich Soused\'{\i}k}
\ead{bedrich.sousedik@ucdenver.edu}
\author[mu,fs]{Jakub \v{S}\'{\i}stek\corref{cor1}}
\ead{sistek@math.cas.cz}
\cortext[cor1]{Corresponding author; tel.: +420 222 090 710; fax: +420 222 211 638.}
\address[ucd]{Department of Mathematical and Statistical Sciences,
University of Colorado Denver,\\ Denver, CO 80217-3364, USA.}
\address[ut]{Institute of Thermomechanics, Academy of Sciences of the Czech
Republic,\\ Dolej\v{s}kova 1402/5, CZ - 182 00 Prague 8, Czech Republic.}
\address[mu]{Institute of Mathematics, Academy of Sciences of the
Czech Republic, \\ \v Zitn\' a 25, CZ - 115 67 Prague 1, Czech Republic.}
\address[fs]{Department of Mathematics, Faculty of Mechanical Engineering, \\
Czech Technical University in Prague,  \\ Karlovo n\'{a}m\v{e}st\'{\i} 13, CZ -
121 35 Prague 2, Czech Republic.}
\begin{abstract}
The adaptive BDDC method is extended to the selection of face constraints in three dimensions.
A new implementation of the BDDC method is presented based on a global formulation without
an explicit coarse problem, with massive parallelism provided by a multifrontal solver.
Constraints are implemented by a projection and
sparsity of the projected operator is preserved by a generalized change of variables.
The effectiveness of the method is illustrated on several engineering problems.
\end{abstract}
\begin{keyword}
parallel algorithms \sep domain decomposition \sep iterative substructuring \sep BDDC \sep adaptive constraints
\end{keyword}
\maketitle
\end{frontmatter}

\section{Introduction}

The \emph{Balancing Domain Decomposition by Constraints} (BDDC) was
developed by Dohrmann~\cite{Dohrmann-2003-PSC} as a~primal alternative to
the \emph{Finite Element Tearing and Interconnecting - Dual, Primal}
(FETI-DP) by Farhat et al.~\cite{Farhat-2000-SDP}. Both methods use
constraints to impose equality of new \textquotedblleft
coarse\textquotedblright\ variables on substructure interfaces, such as
values at substructure corners or weighted averages over edges and faces.
Primal variants of the FETI-DP were also independently proposed by Cros~\cite%
{Cros-2003-PSC} and by Fragakis and Papadrakakis~\cite{Fragakis-2003-MHP}.
It has been shown in~\cite{Mandel-2007-BFM, Sousedik-2008-EPD} that these
methods are in fact the same as BDDC. Polylogarithmic condition number
bounds for FETI-DP were first proved in~\cite{Mandel-2001-CDP} and
generalized to the case of coefficient jumps between substructures in \cite%
{Klawonn-2002-DPF}. The same bounds were obtained for BDDC in~\cite%
{Mandel-2003-CBD,Mandel-2005-ATP}. A proof that the eigenvalues of the
preconditioned operators of both methods are actually the same except for
the eigenvalues equal to one was given in~\cite{Mandel-2005-ATP} and then
simplified in~\cite{Brenner-2007-BFW, Li-2006-FBB,Mandel-2007-BFM}. FETI-DP,
and, equivalently,\ BDDC are quite robust. It can be proved that the
condition number remains bounded even for large classes of subdomains with
rough interfaces in 2D \cite{Klawonn-2008-AFA,Widlund-2008-AIS} as well as
in many cases of strong discontinuities of coefficients, including some
configurations when the discontinuities cross substructure boundaries 
\cite{Pechstein-2008-AFM,Pechstein-2009-AFM}. However, the condition number 
deteriorates in many situations of practical importance and 
a better selection of constraints is desirable.
Enriching the coarse space so that the iterations run
in a subspace devoid of ``difficult'' modes
has been a successful trick in iterative substructuring methods used,
e.g., in the development of BDD and FETI\ for plates from the base BDD and
FETI methods 
\cite{Farhat-1998-TFM1,LeTallec-1994-BDD,LeTallec-1998-NND,Mandel-1999-SSM}.
Methods that build a coarse space adaptively from local eigenvalue
calculations were also devised in other (though related) contexts 
\cite{Brezina-1999-IMC,Fish-1997-GAM,Mandel-1993-IBI,Mandel-1994-IPF,Poole-2003-AAC}. 
Adaptive enrichment for BDDC and FETI-DP was proposed in \cite{Mandel-2006-ACS,Mandel-2007-ASF},
with the added coarse functions built
from eigenproblems based on adjacent pairs of substructures in 2D. The
adaptive method, however, was formulated in terms of FETI-DP operators, and
it was quite complicated.

Here, we develop the adaptive algorithm directly in terms of BDDC operators,
resulting in a much simpler formulation and implementation. Of course,
the algorithm still allows a~translation into the language of the FETI-DP.
We then extend the construction from \cite{Mandel-2006-ACS,Mandel-2007-ASF}
to 3D. We find that the heuristic eigenvalue-based estimates still work
reasonably well and that our adaptive approach can result in the
concentration of computational work in a small troublesome part of the
problem, which leads to a good convergence behaviour at a small added cost.

We also develop a new implementation framework that operates on global
matrices, builds no explicit coarse problem, and gets much of its
parallelism through the direct solver used for solution of an auxiliary
decoupled system. To preserve sparsity, we use a variant of the change of
variables from~\cite{Li-2006-FBB}, extended to an arbitrary number of
constraints. Our current parallel implementation is built on top of the
multifrontal massively parallel sparse direct solver MUMPS~\cite%
{Amestoy-2000-MPD}, motivated also by an earlier implementation of the BDDC
preconditioner based on the frontal solver~\cite{Sistek-2010-BFS}.

The rest of the paper is organised as follows. In Section~\ref{sec:BDDC}, we
establish the notation and review the BDDC algorithm in a form suitable for
our purposes. In Section \ref{sec:adaptive-constraints}, we describe the
adaptive method. Section~\ref{sec:mumps} then describes the implementation
on top of a massively parallel direct solver. Section~\ref{sec:generalized}
presents the generalized change of variables to preserve sparsity. Section~%
\ref{sec:implement} describes some further details of the implementation.
Numerical results are presented in Section~\ref{sec:numerical}. Section~\ref%
{sec:conclusion} contains the summary and concluding remarks.

Some of results in this paper were presented in the thesis \cite%
{Sousedik-2008-CDD}.

\section{Notation, substructuring, and BDDC}

\label{sec:BDDC}

To establish notation, we first briefly review standard substructuring
concepts and state the BDDC method in a form suitable for our purposes. The
setting and notation here is compatible with \cite{Mandel-2008-MMB}, with
some additions. See, e.g.,~\cite{Smith-1996-DD,Toselli-2005-DDM} for more
details about iterative substructuring and \cite%
{Kruis-2006-DDM,Mandel-2003-CBD,Mandel-2008-MMB,Smith-1996-DD,Toselli-2005-DDM}
for BDDC.

Consider an elliptic boundary value problem defined on a bounded domain 
$\Omega\subset\mathbb{R}^{3}$ and discretized by
conforming finite elements. The domain $\Omega$ is decomposed into $N$
nonoverlapping \emph{subdomains} $\Omega_{i}$, $i=1,\dots N$, also called
\emph{substructures}, so that each substructure $\Omega_{i}$ is a union of
finite elements. Each node is associated with one degree of freedom in the
scalar case, and with 3 displacement degrees of freedom in the case of
linear elasticity. The nodes contained in more than one substructure 
are called the \emph{interface}, denoted by $\Gamma$, and $\Gamma_{i} = \Gamma \cap \Omega_{i}$ 
is the interface of substructure~$\Omega_{i}$. The interface $\Gamma$ 
may also be classified as the union of three different types of
nonoverlapping sets: \emph{faces}, \emph{edges}, and \emph{corners}. We will adopt here the following 
simple definition. A~\emph{face} contains all nodes shared solely by one pair of subdomains,
an \emph{edge} contains all nodes shared by same
set of more than two subdomains, and a \emph{corner} is a degenerate edge
with only one node; for a more general definition see e.g.~\cite{Klawonn-2006-DPF}. 
Edges and faces are also called \emph{globs}.

We identify finite element functions with the vectors of their coefficients
in the standard finite element basis. These coefficients are also called
\emph{variables} or \emph{degrees of freedom}. We also identify linear
operators with their matrices, in bases that will be clear from the context.

The space of all (vectors of the degrees of freedom of) finite element
functions on subdomain~$\Omega_{i}$ is denoted by $W_{i}$, and let%
\begin{equation}
W=W_{1}\times\cdots\times W_{N}.  \label{eq:W}
\end{equation}
The space $W$ is equipped with the standard $\mathbb{R}^{n}$ basis and
the Euclidean inner product $\left\langle w,v\right\rangle =w^{\mathrm{T}}v$%
. For a symmetric positive semidefinite matrix~$M$, $\left\langle
u,v\right\rangle_{M}=\left\langle Mu,v\right\rangle $, and $\left\Vert
u\right\Vert _{M}=\left\langle Mu,u\right\rangle ^{1/2}$.

Let $A_{i}:W_{i}\rightarrow W_{i}$ be the local substructure stiffness
matrix, obtained by the subassembly of element matrices only in substructure
$\Omega_{i}$. The matrices$~A_{i}$ are symmetric positive semidefinite for an elliptic problem. 
We can write vectors and matrices in the block form
\begin{equation}
w=\left[
\begin{array}{c}
w_{1} \\
\vdots \\
w_{N}%
\end{array}
\right] ,\quad w\in W,\quad A=%
\begin{bmatrix}
A_{1} &  &  \\
& \ddots &  \\
&  & A_{N}%
\end{bmatrix}
:W\rightarrow W.  \label{eq:block-operators}
\end{equation}

Now let $U\subset W$ be the space of all functions from $W$ that are
continuous across substructure interfaces. We are interested in solving the
problem
\begin{equation}
u\in U:\quad\left\langle Au,v\right\rangle =\langle f,v\rangle,\quad
\forall\,v\in U\,,  \label{eq:var}
\end{equation}
where $f\in W$ is a given right-hand side. Vectors from $U$ are called
vectors of \emph{global} degrees of freedom, while vectors from $W_{i}$ are
called \emph{local}. The space $U$ is equipped with the basis of $0$-$1$
vectors with one basis vector for each global degree of freedom. The basis
vectors have $1$s in the places where the global degree of freedom coincides
with a local one. The matrix
\begin{equation}
R:U\rightarrow W
\end{equation}
formed from these basis vectors as columns, is the familiar global-to-local
mapping that restricts the global vectors of degrees of freedom to local
degrees of freedom on each $\Omega_{i}$. Thus, $R^{\mathrm{T}}AR$ is the
global stiffness matrix, and (\ref{eq:var}) is equivalent to the assembled
system 
\begin{equation}
\label{eq:assembled}
R^{\mathrm{T}}ARv=R^{\mathrm{T}}f.
\end{equation}
The matrix $R$ is also the
matrix of the canonical embedding $U\subset W$ in the given bases.

Denote by $U_{I}\subset W$ the space of all (vectors of) finite element
functions with nonzero values only in the interiors of substructures $\Omega_{i}$. 
Then $U_{I}\subset U$, and the space $W$ is decomposed as the $A $-orthogonal direct sum
\begin{equation}
W=U_{I}\oplus W_{H},\quad U_{I}\perp_{A}W_{H},
\label{eq:harmonic-decomposition-W}
\end{equation}
where the functions from $W_{H}$ are called \emph{discrete harmonic}. Such
functions are fully determined by values of degrees of freedom at the interface, 
and they have minimal energy on every subdomain. 
Therefore, in a computer implementation, only interface values of discrete harmonic functions 
need to be stored.

The $A$-orthogonal projection onto $U_{I}$ is denoted by
\begin{equation}
P:W\rightarrow U_{I}.
\end{equation}
For $w\in W$, $\left( I-P\right) w$ is the discrete harmonic extension from
the values of $w$ on the substructure boundaries. The evaluation of $Pw$
consists of the solution of $N$ independent Dirichlet problems, one in each
substructure.

The space of all discrete harmonic functions from $W$ that are continuous
at interface is denoted by $\widehat{W}$. We have
\begin{equation}
\widehat{W}=W_{H}\cap U=\left( I-P\right) U,
\end{equation}
and the $A$-orthogonal decomposition
\begin{equation}
U=U_{I}\oplus\widehat{W},\quad U_{I}\perp_{A}\widehat{W}.
\label{eq:harmonic-decomposition-U}
\end{equation}
The solution $v\in U$ of problem (\ref{eq:assembled}) is split as
\begin{equation}
Rv=u+w,\quad u\in U_{I},\text{ }w\in\widehat{W}.
\end{equation}
Solving for the interior component $u\in U_{I}$ decomposes into $N$
independent Dirichlet problems. We are interested in finding the discrete
harmonic component $w\in\widehat{W}$, which is the solution of the reduced
problem
\begin{equation}
w\in\widehat{W}:\quad\left\langle Aw,z\right\rangle =\left\langle
f,z\right\rangle, \quad\forall z\in\widehat{W}.  \label{eq:reduced-problem}
\end{equation}

We further need an averaging operator
\begin{equation}
E:W\rightarrow U.  \label{eq:E}
\end{equation}
The operator $E$ replaces the variables on the interface 
by their averages (arithmetic or weighted) from all adjacent subdomains, and it
preserves variables in the interiors of substructures. The operator $E$
is a projection from $W$ onto $U$. Then the operator
\begin{equation}
\left( I-P\right) E:W\rightarrow\widehat{W}  \label{eq:harm-E}
\end{equation}
is a projection from $W$ onto $\widehat{W}$. Its evaluation consists of
averaging between the substructures, followed by the discrete harmonic
extension from the substructure boundaries. Also, note that%
\begin{equation}
\left( I-\left( I-P\right) E\right) w=\left( I-P\right) \left( I-E\right) w, 
\quad\forall w\in W_{H},  \label{eq:IPE}
\end{equation}
since $Pw=0$ if $w\in W_{H}$.

Proper weights (e.g. proportional to the substructure stiffness) in the
averaging given by $E$ are important for the performance of BDDC (as well as other
iterative substructuring methods) independent of different stiffness of
substructures \cite{Klawonn-2006-DPF,Mandel-2005-ATP}.

The BDDC preconditioner is characterized by a selection of \emph{coarse
degrees of freedom,} such as values at corners and averages over edges or
faces. The action of the BDDC preconditioner is then defined in the space
given by the requirement that the coarse degrees of freedom on adjacent
substructures coincide, which is enforced in the algorithms by \emph{%
constraints}. So, the design of the BDDC preconditioner is characterized by
a selection of an intermediate space $\widetilde{W}$ satisfying these
constraints,%
\begin{equation}
\widehat{W}\subset\widetilde{W}\subset W_{H}.  \label{eq:w-tilde}
\end{equation}
The BDDC then consists of preconditioned conjugate gradients (PCG) applied
to the problem~(\ref{eq:reduced-problem}) with the preconditioner%
\begin{equation}
M_{BDDC}:r\mapsto u=\left( I-P\right) Ew,\quad w\in\widetilde{W}%
:\quad\left\langle Aw,z\right\rangle =\left\langle r,\left( I-P\right)
Ez\right\rangle ,\quad\forall z\in\widetilde{W},  \label{eq:s-BDDC}
\end{equation}
where $r$ is the residual in the PCG method. The following condition number
bound for BDDC will play an essential role in our design of the adaptive
method.

\begin{theorem}[\protect\cite{Mandel-2005-ATP}]
\label{thm:lambda-bound}The eigenvalues of the preconditioned operator of
the BDDC method satisfy $1\leq\lambda\leq \omega_{BDDC}$, where
\begin{equation}
\omega_{BDDC}=\sup_{w\in\widetilde{W}}\frac{\left\Vert \left( I-\left(
I-P\right) E\right) w\right\Vert _{A}^{2}}{\left\Vert w\right\Vert _{A}^{2}}.
\label{eq:omega}
\end{equation}
\end{theorem}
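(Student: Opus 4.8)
The plan is to read the preconditioner (\ref{eq:s-BDDC}) as a one--level additive Schwarz operator built from the single space $\widetilde W$ and the projection $T=(I-P)E$ of (\ref{eq:harm-E}), and to extract the extreme eigenvalues from a minimization characterization of $M_{BDDC}^{-1}$. I would work throughout in $\langle\cdot,\cdot\rangle_A$; this is a genuine inner product on $\widetilde W$ once the primal constraints defining $\widetilde W$ remove the subdomain nullspaces, so that both the operator $\widehat S$ of the reduced problem (\ref{eq:reduced-problem}) and $M_{BDDC}$ are symmetric and positive definite on $\widehat W$. First I would rewrite (\ref{eq:s-BDDC}) as $M_{BDDC}=T\widetilde A^{-1}T^{\mathrm T}$, where $\widetilde A^{-1}$ is the solution operator of the Galerkin problem $\langle Aw,z\rangle=\langle g,z\rangle$ for all $z\in\widetilde W$ and $T^{\mathrm T}$ is the Euclidean adjoint. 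Since $T$ maps $W$ onto $\widehat W\subset\widetilde W$ and restricts to the identity on $\widehat W$ (because $Pw=0$ and $Ew=w$ for $w\in\widehat W$), the restriction $\tau=T|_{\widetilde W}$ is a surjection onto $\widehat W$, and the standard additive Schwarz minimization identity gives, for $v\in\widehat W$,
\[
\langle M_{BDDC}^{-1}v,v\rangle=\min\{\,\|z\|_A^2:\ z\in\widetilde W,\ Tz=v\,\}.
\]
As the eigenvalues of $M_{BDDC}\widehat S$ are those of the pencil $(\widehat S,M_{BDDC}^{-1})$, they coincide with the Rayleigh quotients $\|v\|_A^2/\langle M_{BDDC}^{-1}v,v\rangle$ over $v\in\widehat W$.

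The lower bound is then immediate: for $v\in\widehat W$ the choice $z=v$ is admissible in the minimum, since $v\in\widehat W\subset\widetilde W$ and $Tv=v$, so $\langle M_{BDDC}^{-1}v,v\rangle\le\|v\|_A^2$ and every eigenvalue is at least $1$. For the upper bound I would use that any admissible $z$ satisfies $\|v\|_A^2=\|Tz\|_A^2\le\|\tau\|_A^2\|z\|_A^2$; taking the minimum over such $z$ yields $\langle M_{BDDC}^{-1}v,v\rangle\ge\|v\|_A^2/\|\tau\|_A^2$, and hence $\lambda_{\max}=\|\tau\|_A^2=\sup_{w\in\widetilde W}\|(I-P)Ew\|_A^2/\|w\|_A^2$.

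It remains to match $\|\tau\|_A^2$ with the jump norm (\ref{eq:omega}), and this is where I expect the real work to lie. The operator $\tau$ is an idempotent on the Hilbert space $(\widetilde W,\langle\cdot,\cdot\rangle_A)$ that is neither $0$ nor the identity, using $\{0\}\ne\widehat W\subsetneq\widetilde W$; by the classical norm identity for oblique projections, $\|\tau\|_A=\|I-\tau\|_A$. Since $(I-T)w=(I-P)(I-E)w$ for $w\in\widetilde W\subset W_H$ by (\ref{eq:IPE}), this gives $\lambda_{\max}=\|I-T\|_A^2=\omega_{BDDC}$ and completes the bound $1\le\lambda\le\omega_{BDDC}$. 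The main obstacle is exactly this last identity: because the averaging $T=(I-P)E$ is only an oblique, non--$A$--orthogonal projection, the natural estimate produces $\|T\|_A$ rather than the computable quantity $\omega_{BDDC}=\|I-T\|_A^2$, and it is the projection--norm identity $\|T\|_A=\|I-T\|_A$ that reconciles them. I would either cite this identity or prove it inline by reducing to the two--dimensional subspace spanned by $z$ and $(I-T)z$.
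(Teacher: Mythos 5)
Your argument is correct, but be aware that the paper itself contains no proof of Theorem~\ref{thm:lambda-bound}: the result is quoted from \cite{Mandel-2005-ATP}, so the comparison is with that reference rather than with anything in the present text. Your route --- the additive-Schwarz/fictitious-space minimization identity $\left\langle M_{BDDC}^{-1}v,v\right\rangle =\min \left\{ \left\Vert z\right\Vert _{A}^{2}:z\in \widetilde{W},\ \left( I-P\right) Ez=v\right\} $, the lower bound from admissibility of $z=v$, the upper bound $\lambda _{\max }=\left\Vert \tau \right\Vert _{A}^{2}$ for $\tau =\left( I-P\right) E|_{\widetilde{W}}$, and finally the norm identity for oblique projections --- is in substance the algebraic argument of the cited reference, and you correctly isolated the one genuinely nontrivial step: since the averaging is only an oblique (non-$A$-orthogonal) idempotent on $\left( \widetilde{W},\left\langle \cdot ,\cdot \right\rangle _{A}\right) $, the natural estimate produces $\left\Vert \tau \right\Vert _{A}^{2}$, while the statement (\ref{eq:omega}) is in terms of the complementary ``jump'' operator $I-\tau $, and the classical identity $\left\Vert \tau \right\Vert _{A}=\left\Vert I-\tau \right\Vert _{A}$ for idempotents $\tau \neq 0,I$ is exactly what reconciles the two. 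That identity is also what underlies the coincidence of the BDDC and FETI-DP spectra mentioned in the introduction, so invoking it (or proving it by the two-dimensional reduction you sketch) is legitimate and standard.

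Three small caveats, none fatal. First, positive definiteness of $\left\langle \cdot ,\cdot \right\rangle _{A}$ on $\widetilde{W}$ should be tied to Assumption~\ref{ass:pos-def}: $A$ is positive definite on $W^{c}$ and $\widetilde{W}\subset W^{c}$, which is the precise form of your informal ``the primal constraints remove the subdomain nullspaces.'' Second, the norm identity requires $\left\{ 0\right\} \neq \widehat{W}\subsetneq \widetilde{W}$, as you note; in the degenerate case $\widetilde{W}=\widehat{W}$ the preconditioner is exact, every eigenvalue equals $1$, yet $\omega _{BDDC}=0$, so the inequality $\lambda \leq \omega _{BDDC}$ as stated fails --- this is an implicit nondegeneracy hypothesis of the theorem itself, but your proof should say so explicitly. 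Third, you assert the equality $\lambda _{\max }=\left\Vert \tau \right\Vert _{A}^{2}$ although your displayed estimate only delivers $\lambda _{\max }\leq \left\Vert \tau \right\Vert _{A}^{2}$; equality does hold (combine the maximum over $v$ with the inner maximum over admissible $z$ to get $\lambda _{\max }=\sup_{z\in \widetilde{W}}\left\Vert \tau z\right\Vert _{A}^{2}/\left\Vert z\right\Vert _{A}^{2}$), and only the inequality is needed for the theorem, but as written the equality sign is unearned.
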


The BDDC enforces the equality of corner coarse degrees of freedom directly by
using the space $W^{c}$, consisting of all functions where the local degrees
of freedom on the substructure corners coincide. Then
\begin{equation}
U\subset W^{c}\subset W.
\end{equation}
Just like $U$, space $W^{c}$ is equipped with a basis consisting of $0$-$1$%
vectors. The basis vector corresponding to a corner degree of freedom has $1$%
s in the places where the global degree of freedom coincides with the
corresponding substructure degree of freedom. The global-to-local matrix%
\begin{equation}
R^{c}:W^{c}\rightarrow W,
\end{equation}
formed from these basis vectors as columns, is the matrix of the canonical
embedding $W^{c}\subset W$, and
\begin{equation}
A^{c}=R^{c\mathrm{T}}AR^{c}.  \label{eq:c-subassembly}
\end{equation}
is the stiffness matrix assembled at the subdomain corners only 
(Figure \ref{fig:virtual_mesh}).

We require that there are sufficiently many corner constraints,
which leads to the following assumption.

\begin{assumption}
\label{ass:pos-def}The matrix $A$ is positive definite on $W^{c}$.
\end{assumption}

Denote by $\widetilde{W}^{c}$ the space all of discrete harmonic functions
in $W^{c}$,  
\begin{equation}
\widetilde{W}^{c}=W^{c}\cap W_{H}.  \label{eq:tildeWc}
\end{equation}
Then
\begin{equation}
\widehat{W}\subset\widetilde{W}\subset\widetilde{W}^{c}\subset W_{H},  \label{eq:spaces}
\end{equation}
and we construct the space $\widetilde{W}$ by enforcing the remaining
constraints weakly by a matrix $D$,
\begin{equation}
\widetilde{W}=\left\{ w\in\widetilde{W}^{c}:Dw=0\right\} .  \label{eq:Wtilde}
\end{equation}
Each row of $D$ defines one constraint. We require that the
constraints are satisfied by all functions that are continuous across the
interfaces,
\begin{equation}
Dw=0,\quad\forall w\in U.  \label{eq:D-compatible}
\end{equation}
Note that (\ref{eq:D-compatible}) implies that $\widehat{W}\subset
\widetilde{W}$, and that the constraints $Dw=0$ involve boundary variables
only. The adaptive algorithm will construct such matrix $D$.

\begin{lemma}
\label{lem:saddle}The BDDC preconditioner (\ref{eq:s-BDDC}) satisfies
\begin{equation}
M_{BDDC}:r\mapsto u=\left( I-P\right) ER^{c}w_{c}.  \label{eq:BDDC-prec}
\end{equation}
For some $\lambda$, it holds
\begin{equation}
\begin{array}{ccccc}
A^{c}w_{c} & + & D^{c\mathrm{T}}\lambda & = & R^{c\mathrm{T}}E^{\mathrm{T}%
}\left( I-P\right) ^{\mathrm{T}}r, \\
D^{c}w_{c} &  &  & = & 0,
\end{array}
\label{eq:BDDC-system}
\end{equation}
where
\begin{equation}
D^{c}=DR^{c}  \label{eq:BDDC-Dc}
\end{equation}
differs from $D$ only by omitting some zero columns corresponding to corners.
\end{lemma}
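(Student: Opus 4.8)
The plan is to prove Lemma~\ref{lem:saddle} by showing that the two characterizations of the preconditioner---the variational one~\eqref{eq:s-BDDC} restricted to $\widetilde{W}$ and the saddle-point one~\eqref{eq:BDDC-system} posed on $\widetilde{W}^{c}$ with a Lagrange multiplier $\lambda$---produce the same $w$, hence the same output $u$. The starting point is the defining variational problem for $w\in\widetilde{W}$, namely $\langle Aw,z\rangle = \langle r,(I-P)Ez\rangle$ for all $z\in\widetilde{W}$. Since $\widetilde{W}\subset\widetilde{W}^{c}=W^{c}\cap W_{H}$, I would first express vectors of $\widetilde{W}^{c}$ through the embedding $R^{c}:W^{c}\to W$: writing $w=R^{c}w_{c}$ realizes the corner-continuity constraints automatically, and the remaining constraints $Dw=0$ become $DR^{c}w_{c}=D^{c}w_{c}=0$ by~\eqref{eq:BDDC-Dc}. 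The discrete-harmonic requirement is handled separately and does not affect the algebra below, since it only pins down the interior values.

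\medskip

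First I would rewrite the bilinear form and the right-hand side in the coarse coordinates. Using $w=R^{c}w_{c}$ and $z=R^{c}z_{c}$, the energy form becomes $\langle Aw,z\rangle = z_{c}^{\mathrm{T}}R^{c\mathrm{T}}AR^{c}w_{c} = \langle A^{c}w_{c},z_{c}\rangle$ by the definition~\eqref{eq:c-subassembly} of $A^{c}$, and the functional $\langle r,(I-P)Ez\rangle$ becomes $z_{c}^{\mathrm{T}}R^{c\mathrm{T}}E^{\mathrm{T}}(I-P)^{\mathrm{T}}r$, which is exactly the right-hand side appearing in the first block of~\eqref{eq:BDDC-system}. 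Thus the variational problem reads: find $w_{c}$ with $D^{c}w_{c}=0$ such that $\langle A^{c}w_{c},z_{c}\rangle = \langle R^{c\mathrm{T}}E^{\mathrm{T}}(I-P)^{\mathrm{T}}r,\,z_{c}\rangle$ for all $z_{c}$ satisfying $D^{c}z_{c}=0$. This is a standard equality-constrained quadratic minimization, and under Assumption~\ref{ass:pos-def} the matrix $A^{c}$ is positive definite on $W^{c}$, so the problem is well posed.

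\medskip

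Next I would introduce the Lagrange multiplier to remove the constraint from the test space. The constrained variational problem is equivalent to stating that $A^{c}w_{c} - R^{c\mathrm{T}}E^{\mathrm{T}}(I-P)^{\mathrm{T}}r$ annihilates the subspace $\{z_{c}:D^{c}z_{c}=0\} = \ker D^{c}$; equivalently, this residual lies in $(\ker D^{c})^{\perp} = \operatorname{range}(D^{c\mathrm{T}})$, so there exists $\lambda$ with $A^{c}w_{c} - R^{c\mathrm{T}}E^{\mathrm{T}}(I-P)^{\mathrm{T}}r = -D^{c\mathrm{T}}\lambda$, which is precisely the first row of~\eqref{eq:BDDC-system}, while $D^{c}w_{c}=0$ is the second. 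The equivalence of the constrained problem with its Lagrangian saddle-point form is the one point requiring care: one must check that $A^{c}$ restricted to $\ker D^{c}$ is invertible so that $w_{c}$ is unique, which again follows from Assumption~\ref{ass:pos-def}, and that the identity $(\ker D^{c})^{\perp}=\operatorname{range}(D^{c\mathrm{T}})$ gives existence of $\lambda$. Finally, substituting $w=R^{c}w_{c}$ into the output formula $u=(I-P)Ew$ of~\eqref{eq:s-BDDC} yields $u=(I-P)ER^{c}w_{c}$, establishing~\eqref{eq:BDDC-prec}. The claim that $D^{c}=DR^{c}$ merely drops zero columns of $D$ is immediate from the structure of $R^{c}$, whose corner columns are $0$-$1$ selection vectors, combined with~\eqref{eq:D-compatible}, which forces the corner-associated entries of $D$ to be consistent across coincident corner variables. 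I expect the only genuine obstacle to be the clean bookkeeping of the change of coordinates $w=R^{c}w_{c}$ together with the interior/harmonic splitting; the saddle-point reformulation itself is routine once positive definiteness on $W^{c}$ is invoked.
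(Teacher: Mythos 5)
Your proof has a genuine gap, and it sits exactly where the substance of the lemma lies. When you pass from the variational problem \eqref{eq:s-BDDC}, posed on $\widetilde{W}=\{w\in\widetilde{W}^{c}:Dw=0\}$, to the statement ``find $w_{c}$ with $D^{c}w_{c}=0$ such that $\langle A^{c}w_{c},z_{c}\rangle=\langle R^{c\mathrm{T}}E^{\mathrm{T}}(I-P)^{\mathrm{T}}r,z_{c}\rangle$ for all $z_{c}$ with $D^{c}z_{c}=0$,'' you silently enlarge both the trial and the test space: the set $\{R^{c}w_{c}:D^{c}w_{c}=0\}$ contains functions with arbitrary interior values, whereas $\widetilde{W}$ contains only discrete harmonic ones. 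A Galerkin problem on a strictly larger space does not in general have the same solution as its restriction to a subspace, so your remark that the discrete-harmonic requirement ``does not affect the algebra below, since it only pins down the interior values'' asserts precisely the fact that needs proof. What you have actually shown is that the saddle-point system \eqref{eq:BDDC-system} is equivalent, by standard Lagrange-multiplier reasoning (which you carry out correctly), to the \emph{enlarged} constrained problem --- not that the enlarged problem is equivalent to \eqref{eq:s-BDDC}.

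The missing step is the interior-variation argument that is the heart of the paper's proof. Take any $z\in U_{I}$: it lies in $W^{c}$, satisfies $Dz=0$ by \eqref{eq:D-compatible} since $U_{I}\subset U$, and, because $E$ preserves interior values while $P$ projects onto $U_{I}$, one has $Ez=z$ and $(I-P)Ez=0$. Hence $z$ is an admissible variation for the enlarged problem and the right-hand-side functional vanishes on it, so the solution $w=R^{c}w_{c}$ satisfies $\langle Aw,z\rangle=0$ for all $z\in U_{I}$, i.e.\ $w$ is discrete harmonic. Therefore $w\in\widetilde{W}$, the variational equation holds in particular for all test functions in $\widetilde{W}$, and so $w$ solves \eqref{eq:s-BDDC}; uniqueness of both solutions, guaranteed by Assumption~\ref{ass:pos-def}, completes the identification, after which $u=(I-P)ER^{c}w_{c}$ follows as you state. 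With this argument inserted, your proof becomes correct and runs in the direction opposite to the paper's (the paper starts from the constrained minimization behind \eqref{eq:BDDC-system} and shows its minimizer is harmonic); the remaining items --- the range argument producing the multiplier $\lambda$ and the zero-columns remark for $D^{c}=DR^{c}$ --- are fine.
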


\begin{proof}
The saddle point problem (\ref{eq:BDDC-system}) is equivalent to the
constrained minimization
\begin{equation}
\frac{1}{2}\left\langle Aw,w\right\rangle -\left\langle r,\left( I-P\right)
Ew\right\rangle \rightarrow\min\quad\text{subject to }w\in W^{c},Dw=0,
\label{eq:BDDC-min}
\end{equation}
with $w=R^{c}w_{c}$. Let $w$ be a solution of (\ref{eq:BDDC-min}) and $z\in
U_{I}$. Since $w$ is optimal with respect to variation $z$, and $\left(
I-P\right) z=0$, we have $\left\langle Aw,z\right\rangle =0$. Thus, $w$ is
discrete harmonic. It follows that (\ref{eq:BDDC-min}) is equivalent to
\begin{equation}
\frac{1}{2}\left\langle Aw,w\right\rangle -\left\langle r,\left( I-P\right)
Ew\right\rangle \rightarrow\min\quad\text{subject to }w\in\widetilde{W}%
^{c},Dw=0,
\end{equation}
which, by (\ref{eq:Wtilde}), is the same as (\ref{eq:s-BDDC}).

Finally, matrix $DR^{c}$ differs from $D$ only by omitting zero columns
because the constraints $Dw=0$ do not involve corners.
\end{proof}

\begin{remark}
In practice, the computation of $\left( I-P\right) ^{\mathrm{T}}r$ can be
omitted, because $r=Ae$, where the error $e$ is discrete harmonic, and then
\begin{equation}
\left\langle P^{\mathrm{T}}r,z\right\rangle =\left\langle P^{\mathrm{T}
}Ae,z\right\rangle =\left\langle Ae,Pz\right\rangle =\left\langle e,z\right\rangle _{A} =0,\quad\forall z\in U_{I},
\end{equation}
thus $P^{\mathrm{T}}r=0$, that is, $r=0$ in the interiors. The condition
that the error $e$ is discrete harmonic is preserved in the iteration by
induction, and the initial error can be made discrete harmonic by a suitable
choice of initial approximation for the reduced problem (e.g., zero).
\end{remark}

\section{Adaptive selection of constraints}

\label{sec:adaptive-constraints}

We first briefly review the principle of the adaptive method from \cite{Mandel-2007-ASF}, 
in a form suitable for our purposes. The condition number
bound $\omega_{BDDC}$ from Theorem \ref{thm:lambda-bound} equals to the maximum eigenvalue 
$\lambda_{1}$ of the associated generalized eigenvalue problem
\begin{equation}
w\in\widetilde{W}:\quad\left\langle \left( I-\left( I-P\right) E\right)
w,\left( I-\left( I-P\right) E\right) z\right\rangle _{A}=\lambda
\left\langle w,z\right\rangle _{A}, \quad\forall z\in\widetilde{W}.
\label{eq:global-eig-var}
\end{equation}
The following statement is well known from linear algebra, e.g. \cite[Theorem 5.2]{Demmel-1997-ANL}.

\begin{lemma}[Courant-Fisher-Weyl minimax principle]
\label{lem:optimal-eig}
Let $c\left( \cdot,\cdot\right) $ be symmetric
positive semidefinite bilinear form on vector space $V$ of dimension $n$ and
$b\left( \cdot,\cdot\right) $ symmetric positive definite bilinear form on $V$. 
Then the generalized eigenvalue problem
\begin{equation}
w\in V:c\left( w,u\right) =\lambda b\left( w,u\right), \quad\forall u\in V
\end{equation}
has $n$ linearly independent eigenvectors $w_{k}$ and the corresponding
eigenvalues are real and nonnegative and the eigenvectors are stationary
points of the Rayleigh quotient $c\left( w,w\right) /b\left( w,w\right) $,
with the stationary values equal to $\lambda_{i}$. Order $%
\lambda_{1}\geq\lambda_{2}\geq\ldots\geq\lambda_{n}\geq0$. Then, for any
subspace $V_{k}\subset V$ of dimension $n-k$,
\begin{equation*}
\max_{w\in V_{k},w\neq0}\frac{c\left( w,w\right) }{b\left( w,w\right) }%
\geq\lambda_{k+1},
\end{equation*}
with equality if
\begin{equation}
V_{k}=\left\{ w\in V:c(w_{\ell},w)=0,\quad\forall\ell=1,\ldots,k\right\} .
\label{eq:complement}
\end{equation}
\end{lemma}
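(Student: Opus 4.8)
The plan is to reduce the generalized eigenvalue problem to a standard symmetric one, using that $b$ is positive definite, and then to obtain the minimax inequality by a dimension-counting argument. First, since $b\left(\cdot,\cdot\right)$ is symmetric positive definite it is an inner product on $V$; choosing any basis and letting $B$ and $C$ denote the Gram matrices of $b$ and $c$, the problem becomes $Cw=\lambda Bw$. Factoring $B=B^{1/2}B^{1/2}$ with $B^{1/2}$ symmetric positive definite, the substitution $y=B^{1/2}w$ yields the standard symmetric problem $\widehat{C}y=\lambda y$ with $\widehat{C}=B^{-1/2}CB^{-1/2}$. Since $C$ is symmetric positive semidefinite, so is $\widehat{C}$, hence $\widehat{C}$ has an orthonormal eigenbasis with real, nonnegative eigenvalues. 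Transforming back gives $n$ linearly independent generalized eigenvectors $w_{k}=B^{-1/2}y_{k}$ which, after normalization, satisfy $b\left(w_{i},w_{j}\right)=\delta_{ij}$ and $c\left(w_{i},w_{j}\right)=\lambda_{i}\delta_{ij}$. Because $c\left(w,w\right)/b\left(w,w\right)=y^{\mathrm{T}}\widehat{C}y/\left(y^{\mathrm{T}}y\right)$ under $y=B^{1/2}w$, the generalized Rayleigh quotient coincides with the standard one, so the eigenvectors are exactly its stationary points, with stationary values $\lambda_{i}$.

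For the inequality I would argue by intersecting subspaces. Fix any $V_{k}\subset V$ with $\dim V_{k}=n-k$ and set $S=\mathrm{span}\left\{w_{1},\dots,w_{k+1}\right\}$, of dimension $k+1$. Since $\dim V_{k}+\dim S=n+1>n=\dim V$, the intersection $V_{k}\cap S$ contains a nonzero vector $w=\sum_{j=1}^{k+1}\alpha_{j}w_{j}$. Using the orthogonality relations,
\[
\frac{c\left(w,w\right)}{b\left(w,w\right)}=\frac{\sum_{j=1}^{k+1}\lambda_{j}\alpha_{j}^{2}}{\sum_{j=1}^{k+1}\alpha_{j}^{2}}\geq\lambda_{k+1},
\]
because $\lambda_{j}\geq\lambda_{k+1}$ for $j\leq k+1$ and the eigenvalues are ordered. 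As $w\in V_{k}$, this gives $\max_{w\in V_{k},\,w\neq0}c\left(w,w\right)/b\left(w,w\right)\geq\lambda_{k+1}$.

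For the equality case I would identify the constraint space explicitly. From $c\left(w_{i},w_{j}\right)=\lambda_{i}\delta_{ij}$, a vector $w=\sum_{j}\alpha_{j}w_{j}$ satisfies $c\left(w_{\ell},w\right)=\lambda_{\ell}\alpha_{\ell}$, so the constraints $c\left(w_{\ell},w\right)=0$ for $\ell=1,\dots,k$ force $\alpha_{\ell}=0$ whenever $\lambda_{\ell}>0$. In the principal case $\lambda_{k}>0$ this yields $V_{k}=\mathrm{span}\left\{w_{k+1},\dots,w_{n}\right\}$, of dimension $n-k$; restricting the Rayleigh quotient to this span gives a maximum of exactly $\lambda_{k+1}$, attained at $w_{k+1}$, so the bound is sharp. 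The one point requiring care is the degenerate situation $\lambda_{k}=0$: the ordering then forces $\lambda_{k+1}=0$, the bound $\max\geq\lambda_{k+1}=0$ holds trivially since $c$ is positive semidefinite, and on the constraint space $c$ vanishes identically, so equality again holds. This degenerate bookkeeping is the only genuinely delicate step; the reduction and the intersection argument are routine.
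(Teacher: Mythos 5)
Your proof is correct. Note that the paper does not prove this lemma at all---it cites it as a well-known linear algebra fact (Demmel, Theorem 5.2)---and your argument, namely reduction to the standard symmetric eigenproblem via the substitution $y=B^{1/2}w$ followed by the Courant--Fischer dimension-counting intersection argument and explicit identification of the constraint space $V_{k}$, is exactly the standard textbook proof of this fact, including the careful handling of the degenerate case $\lambda_{k}=0$ where the constraint space can have dimension larger than $n-k$ but the quotient vanishes on it.
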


Since the bilinear form on the left-hand side of (\ref{eq:global-eig-var})
is symmetric positive semidefinite and the bilinear form on the right-hand
side is symmetric positive definite, Lemma \ref{lem:optimal-eig} applies and leads to the 
following corollary.

\begin{corollary}
The generalized eigenvalue problem (\ref{eq:global-eig-var}) has eigenvalues
$\lambda_{1}\geq\lambda_{2}\geq\ldots\geq\lambda_{n}\geq0$. Denote the
corresponding eigenvectors $w_{\ell}$. Then, for any $k=1,\ldots,n-1$, and
any linear functionals $L_{\ell}$ on $W$, $\ell=1,\ldots,k$,%
\begin{equation}
\max\left\{ \frac{\left\Vert \left( I-\left( I-P\right) E\right)
w\right\Vert _{A}^{2}}{\left\Vert w\right\Vert _{A}^{2}}:w\in\widetilde {W},%
\text{ }L_{\ell}\left( w\right) =0,\quad\forall\ell=1,\ldots ,k\right\}
\geq\lambda_{k+1},
\end{equation}
with equality if
\begin{equation}
L_{\ell}\left( w\right) =\left\langle \left( I-\left( I-P\right) E\right)
w_{\ell},\left( I-\left( I-P\right) E\right) w\right\rangle _{A}.
\label{eq:extra-constr-var}
\end{equation}
\end{corollary}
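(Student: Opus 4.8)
The plan is to recognize the statement as a direct specialization of Lemma~\ref{lem:optimal-eig} to the concrete forms appearing in~(\ref{eq:global-eig-var}). First I would set $V=\widetilde{W}$, let $n=\dim\widetilde{W}$, and introduce the two bilinear forms
\begin{equation*}
c\left(w,z\right)=\left\langle\left(I-\left(I-P\right)E\right)w,\left(I-\left(I-P\right)E\right)z\right\rangle_{A},\qquad b\left(w,z\right)=\left\langle w,z\right\rangle_{A}.
\end{equation*}
Writing $B=I-\left(I-P\right)E$, the form $c$ equals $\left\langle Bw,Bz\right\rangle_{A}$, so it is symmetric and $c\left(w,w\right)=\left\Vert Bw\right\Vert_{A}^{2}\geq0$, i.e.\ symmetric positive semidefinite; and $b$ is symmetric positive definite on $\widetilde{W}$ because $A$ is positive definite on $W^{c}\supset\widetilde{W}$ by Assumption~\ref{ass:pos-def} together with~(\ref{eq:spaces}). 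Thus the hypotheses of Lemma~\ref{lem:optimal-eig} hold, and its conclusion yields the $n$ real nonnegative eigenvalues $\lambda_{1}\geq\cdots\geq\lambda_{n}\geq0$ of~(\ref{eq:global-eig-var}) together with the eigenvectors $w_{\ell}$; note that $c\left(w,w\right)/b\left(w,w\right)$ is exactly the Rayleigh quotient in~(\ref{eq:omega}).

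For the inequality, the key observation is that the $k$ functionals $L_{1},\dots,L_{k}$, restricted to $\widetilde{W}$, cut out the subspace
\begin{equation*}
V_{k}=\left\{w\in\widetilde{W}:L_{\ell}\left(w\right)=0,\ \ell=1,\dots,k\right\}
\end{equation*}
of codimension at most $k$, hence $\dim V_{k}\geq n-k$. Since $k\leq n-1$, I may pick any subspace $V_{k}^{\prime}\subseteq V_{k}$ of dimension exactly $n-k$; Lemma~\ref{lem:optimal-eig} applied to $V_{k}^{\prime}$ gives $\max_{w\in V_{k}^{\prime}}c\left(w,w\right)/b\left(w,w\right)\geq\lambda_{k+1}$, and enlarging the feasible set from $V_{k}^{\prime}$ to $V_{k}$ can only increase the maximum. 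Rewriting the quotient in terms of the $A$-norms then gives the claimed bound.

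For the equality case I would simply observe that the specific choice~(\ref{eq:extra-constr-var}) is exactly $L_{\ell}\left(w\right)=c\left(w_{\ell},w\right)$. Consequently the constraint $L_{\ell}\left(w\right)=0$ is identical to $c\left(w_{\ell},w\right)=0$, so the set $V_{k}$ above coincides verbatim with the subspace~(\ref{eq:complement}) in Lemma~\ref{lem:optimal-eig}, and the equality asserted there transfers directly.

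The step needing the most care is the dimension bookkeeping in the inequality: the functionals $L_{\ell}$ need not be linearly independent on $\widetilde{W}$, so $V_{k}$ may be strictly larger than dimension $n-k$, and one must argue via the nested-subspace monotonicity of the maximum that this only strengthens the bound rather than invalidating the appeal to Lemma~\ref{lem:optimal-eig}, whose inequality is stated for subspaces of dimension exactly $n-k$. Everything else is a transcription of the abstract minimax lemma into the notation of the BDDC eigenproblem.
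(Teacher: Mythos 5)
Your proof is correct and takes essentially the same route as the paper, which obtains the corollary precisely by applying Lemma~\ref{lem:optimal-eig} on $V=\widetilde{W}$ with $c\left(w,z\right)=\left\langle \left(I-\left(I-P\right)E\right)w,\left(I-\left(I-P\right)E\right)z\right\rangle_{A}$ and $b\left(w,z\right)=\left\langle w,z\right\rangle_{A}$, noting that $c$ is symmetric positive semidefinite and $b$ is symmetric positive definite. Your additional care (justifying definiteness of $b$ via Assumption~\ref{ass:pos-def} and (\ref{eq:spaces}), and handling possibly dependent functionals by passing to a subspace of dimension exactly $n-k$ and using monotonicity of the maximum) merely spells out details the paper leaves implicit.
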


The next lemma shows that the added constraints $L_{\ell}\left( w\right) =0$
satisfy the compatibility condition (\ref{eq:D-compatible}).

\begin{lemma}
\label{lem:D-compatible}The constraints $L_{\ell}\left( w\right) =0$, with $L_{\ell}$ 
given by (\ref{eq:extra-constr-var}), are satisfied for any $w\in U$.
\end{lemma}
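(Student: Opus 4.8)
The plan is to show that $L_\ell$ annihilates all of $U$ by tracking which $A$-orthogonal summand each of the two arguments of (\ref{eq:extra-constr-var}) lands in. The key observation is that the operator $I-(I-P)E$ maps $U$ into the interior space $U_I$, whereas the eigenvector $w_\ell$ from (\ref{eq:global-eig-var}) produces a vector in $W_H$; since $U_I\perp_A W_H$, the $A$-inner product defining $L_\ell$ must vanish.

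First I would use that $E$ is a projection from $W$ onto $U$, so it restricts to the identity on its range and $Ew=w$ for every $w\in U$. Then, for $w\in U$,
\[
\left(I-\left(I-P\right)E\right)w = w-\left(I-P\right)w = Pw,
\]
which lies in $U_I$ because $P$ is the $A$-orthogonal projection onto $U_I$ from (\ref{eq:harmonic-decomposition-W}).

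Next I would check that the other argument lies in $W_H$. The eigenvector satisfies $w_\ell\in\widetilde{W}$, and the chain of inclusions (\ref{eq:spaces}) gives $\widetilde{W}\subset W_H$, so $w_\ell\in W_H$; moreover $(I-P)Ew_\ell\in\widehat{W}\subset W_H$, since $(I-P)E$ is the projection onto $\widehat{W}$ identified in (\ref{eq:harm-E}). Hence $\left(I-\left(I-P\right)E\right)w_\ell$ is a difference of two vectors of $W_H$ and therefore itself lies in $W_H$. Combining this with the previous step and the $A$-orthogonality of the decomposition (\ref{eq:harmonic-decomposition-W}), namely $U_I\perp_A W_H$, I obtain $L_\ell(w)=\langle(I-(I-P)E)w_\ell,Pw\rangle_A=0$ for every $w\in U$, which is the claim.

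I do not expect a genuine obstacle here: the argument is essentially a bookkeeping of subspace membership under the $A$-orthogonal splitting. The only points that need care are confirming $Ew=w$ on $U$, which is exactly the statement that $E$ is a projection \emph{onto} $U$, and confirming that the eigenvectors $w_\ell$ lie in $W_H$ rather than merely in $W$, which is secured by the inclusion $\widetilde{W}\subset W_H$ in (\ref{eq:spaces}).
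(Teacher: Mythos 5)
Your proof is correct, but it takes a genuinely different---and in fact more careful---route than the paper's. The paper argues in two lines: invoking (\ref{eq:IPE}) as the operator identity $I-\left(I-P\right)E=\left(I-P\right)\left(I-E\right)$ and then noting $\left(I-E\right)w=0$ for $w\in U$, so that the second slot of the $A$-inner product in (\ref{eq:extra-constr-var}) vanishes outright. You instead keep the operator as it stands, compute that on $U$ it acts as $P$, so the second slot equals $Pw\in U_{I}$, place the first slot $\left(I-\left(I-P\right)E\right)w_{\ell}$ in $W_{H}$ via $w_{\ell}\in\widetilde{W}\subset W_{H}$ and $\left(I-P\right)Ew_{\ell}\in\widehat{W}\subset W_{H}$, and finish with $U_{I}\perp_{A}W_{H}$ from (\ref{eq:harmonic-decomposition-W}). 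The difference is substantive: (\ref{eq:IPE}) is stated, and is only valid, on $W_{H}$, and it fails on $U$, where the left-hand side is $Pw$ while the right-hand side is $0$. So the paper's proof, read literally as applying that identity to the argument $w\in U$, overreaches; to repair it one must apply (\ref{eq:IPE}) only in the slot containing $w_{\ell}$ and then either move $I-P$ across the $A$-inner product (using that it is an $A$-self-adjoint idempotent) or invoke exactly your orthogonality $U_{I}\perp_{A}W_{H}$. Your subspace bookkeeping supplies precisely that missing step---and correctly identifies the second factor as $Pw$ rather than $0$---at the modest cost of a few extra lines.
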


\begin{proof}
From (\ref{eq:IPE}), $\left( I-\left( I-P\right) E\right) =\left( I-P\right)
\left( I-E\right) $. For any $w\in U$, $\left( I-E\right) w=0$, because $E$
is a projection on $U$.
\end{proof}

It follows that the optimal decrease of the condition number bound (\ref{eq:omega}) 
can be achieved by adding the rows $d_{\ell}^{\mathrm{T}}$ defined by $d_{\ell}^{\mathrm{T}}
w=L_{\ell}\left( w\right) $ to the constraint matrix $D$ in the definition of $\widetilde{W}$ (\ref{eq:Wtilde}). 

However, solving the global eigenvalue problem
(\ref{eq:global-eig-var}) is expensive, and the vectors $d_{\ell}$ are not
of the form suitable for substructuring, i.e., each $d_{\ell}$ with nonzeros
at one glob only.
For these reasons, we replace (\ref{eq:global-eig-var}) by a collection of
local problems, each defined by considering only two adjacent subdomains 
$\Omega_{i}$ and $\Omega_{j}$ at a~time. Here, subdomains are considered adjacent if
they share an edge in 2D, or a face in 3D (Figure \ref{fig:2domains}). All
quantities associated with such pair will be denoted by the subscript $_{ij}$
. Using also (\ref{eq:IPE}), the generalized eigenvalue problem (\ref{eq:global-eig-var}) becomes
\begin{equation}
w_{ij}\in\widetilde{W}_{ij}:\quad\left\langle \left( I-P_{ij}\right) \left(
I-E_{ij}\right) w_{ij},\left( I-P_{ij}\right) \left( I-E_{ij}\right)
z_{ij}\right\rangle _{A_{ij}}=\lambda\left\langle w_{ij},z_{ij}\right\rangle
_{A_{ij}},\quad\forall z_{ij}\in\widetilde{W}_{ij}.  \label{eq:global-eig-var-ij}
\end{equation}

\begin{assumption}
\label{assum:local-regular} The corner constraints are already sufficient to
prevent relative rigid body motions of any pair of adjacent substructures,
so
\begin{equation}
\forall w_{ij}\in\widetilde{W}_{ij}:{A_{ij}}w_{ij}=0\Rightarrow\left( I-{E}%
_{ij}\right) w_{ij}=0,
\end{equation}
i.e., the corner degrees of freedom are sufficient to constrain the rigid
body modes of the two substructures into a single set of rigid body modes,
which are continuous across the interface~$\Gamma_{ij}$.
\end{assumption}

The maximal eigenvalue $\omega_{ij}$ of (\ref{eq:global-eig-var-ij}) is
finite due to Assumption \ref{assum:local-regular}, and we define the
heuristic \emph{condition number indicator}
\begin{equation}
\widetilde{\omega}=\max\left\{ \omega_{ij}:\Omega_{i}\text{ and }\Omega _{j}%
\text{ are adjacent}\right\} .  \label{eq:cond-ind}
\end{equation}

Considering two adjacent subdomains $\Omega_{i}$ and $\Omega_{j}$ only, we
get the added constraints $L_{\ell}\left( w\right) =0$ from (\ref{eq:extra-constr-var}) as
\begin{equation}
\left\langle \left( I-P_{ij}\right) \left( I-E_{ij}\right) w_{ij,\ell},\left(
I-P_{ij}\right) \left( I-E_{ij}\right) w\right\rangle
_{A_{ij}}=0,\quad\forall\ell=1,\ldots,k_{ij},  \label{eq:extra-constr-var-ij}
\end{equation}
where $w_{ij,\ell}$ are the eigenvectors corresponding to the $k_{ij}$ largest
eigenvalues from (\ref{eq:global-eig-var-ij}).

\begin{algorithm}[Adaptive BDDC]
Find the smallest $k_{ij}$ for each pair of adjacent substructures 
$\Omega_{i}$ and $\Omega_{j}$ to guarantee that
$\lambda_{ij,k_{ij}+1}\leq\tau$, where $\tau$ is a given tolerance, and add the
constraints (\ref{eq:extra-constr-var-ij}) to the definition of $\widetilde{W}$.
\end{algorithm}

The adaptive BDDC method assures that the condition number indicator 
$\widetilde{\omega}\leq\tau$ with the minimum number of added constraints. It
was presented in \cite{Mandel-2007-ASF} starting from corner constraints
only, formulated in terms of FETI-DP, and the result translated to BDDC. We
extend the method to the case a general space $\widetilde{W}$ and give a
much simpler implementation in BDDC directly.

To formulate a numerical algorithm, we need to write the generalized
eigenvalue problem (\ref{eq:global-eig-var-ij}) and the added constraints (\ref{eq:extra-constr-var-ij}) 
in terms of matrices and vectors. 
Consider the space $\widetilde{W}_{ij}$ given by the corner constraints and an initial
constraint matrix $D^{c}_{ij}$ devised from an initial global matrix $D^{c}$. 
Recall that $W^{c}_{ij}$ is the space of functions from $W_{ij}$ that are continuous at corners,
$R^{c}_{ij}:W^{c}_{ij}\rightarrow W_{ij}$ is the identity embedding, 
$A^{c}_{ij}=R^{c\mathrm{T}}_{ij}A_{ij}R^{c}_{ij}$ is the matrix assembled at
the corners, and $D^{c}_{ij}=D_{ij}R^{c}_{ij}$. 
Let 
\begin{equation}
\Pi _{ij}=I-D^{c\mathrm{T}}_{ij}\left( D^{c}_{ij}D^{c\mathrm{T}}_{ij}\right) ^{-1}D^{c}_{ij}
\end{equation}
be the orthogonal projection onto $\limfunc{null}D^{c}_{ij}$,
$\Pi _{ij}: \widetilde{W}^{c}_{ij}\rightarrow \widetilde{W}_{ij}$. 
The initial constraint matrix $D^{c}_{ij}$ can be empty; then $\Pi _{ij}=I$.
The generalized eigenvalue problem (\ref{eq:global-eig-var}) now becomes
\begin{equation}
\Pi _{ij}\left( I-E_{ij}\right) ^{\mathrm{T}}S^{c}_{ij}\left( I-E_{ij}\right) \Pi _{ij}w_{ij}=\lambda _{ij}\Pi _{ij}
S^{c}_{ij}\Pi _{ij}w_{ij},  \label{eq:eig-matrix}
\end{equation}
where
\begin{equation}
S^{c}_{ij}=\left( I-P_{ij}\right) ^{\mathrm{T}}A^{c}_{ij}\left( I-P_{ij}\right) .
\end{equation}
Since
\begin{equation}
\limfunc{null}\Pi _{ij}S^{c}_{ij}\Pi _{ij} \subset \limfunc{null}\Pi _{ij}\left( I-E_{ij}\right)^{\mathrm{T}}
S^{c}_{ij}\left( I-E_{ij}\right) \Pi _{ij},  \label{eq:eig-nullspace}
\end{equation}
the eigenvalue problem (\ref{eq:eig-matrix}) reduces in the factorspace
modulo $\limfunc{null}\Pi _{ij}S^{c}_{ij}\Pi _{ij}$ to a problem with the operator on the
right-hand side positive definite. In some computations, we have used
the subspace iteration method LOBPCG \cite{Knyazev-2001-TOP} to find the
dominant eigenvalues and their eigenvectors. The LOBPCG iterations then
run in the factorspace. To use standard eigenvalue solvers, (\ref{eq:eig-matrix}) 
may be converted to a matrix eigenvalue problem by penalizing
the components in $\limfunc{null}D^{c}_{ij}$ and rigid body modes, as already described in \cite{Mandel-2007-ASF}.

It follows from the matrix form of the eigenvalue problem (\ref{eq:eig-nullspace}), 
that the constraints to be added are
\begin{equation}
L_{ij,\ell }\left( w_{ij}\right) =w_{ij,\ell }^{\mathrm{T}}\Pi _{ij}\left( I-E_{ij}\right) ^{%
\mathrm{T}}S^{c}_{ij}\left( I-E_{ij}\right) \Pi _{ij}w_{ij}=0,
\label{eq:Lijl}
\end{equation}
where $w_{ij}$ is the restriction of $w$ to the pair of subdomains.
That is, we wish to add to the constraint matrix $D$ the rows
\begin{equation}
d_{ij,\ell }=w_{ij,\ell }^{\mathrm{T}}\Pi _{ij}\left( I-E_{ij}\right) ^{\mathrm{T}
}S^{c}_{ij}\left( I-E_{ij}\right) \Pi _{ij}.  \label{eq:d-def}
\end{equation}
These rows are extended to the size of global matrix $D$ simply by zeros.

\begin{proposition}
\label{prop:entry-compatible}
The vectors $d_{ij,\ell }$, constructed for a
domain consisting of only two substructures $\Omega _{i}$ and $\Omega _{j}$,
have matching entries on the interface between the two substructures, with
opposite signs.
\end{proposition}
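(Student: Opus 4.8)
The plan is to prove the cleaner statement that the row $d_{ij,\ell}$ annihilates the continuous subspace $U_{ij}$ of the two-substructure domain, and then to observe that, for only two substructures, this annihilation is exactly equivalent to the asserted sign pattern. So the whole proposition reduces to a compatibility statement of the same kind as Lemma \ref{lem:D-compatible}.

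First I would set up coordinates on $W_{ij}=W_{i}\times W_{j}$, splitting each local vector into its interior and interface parts so that every node on the shared face $\Gamma_{ij}$ carries two degrees of freedom, one in the block of $\Omega_{i}$ and one in the block of $\Omega_{j}$; in these coordinates $U_{ij}$ is the subspace on which the two interface blocks agree. A one-line computation then characterizes its Euclidean annihilator: writing a row as $d=(d_{i}^{I},d_{i}^{\Gamma},d_{j}^{\Gamma},d_{j}^{I})$, the identity $d^{\mathrm{T}}w=0$ for all $w\in U_{ij}$ holds if and only if $d_{i}^{I}=0$, $d_{j}^{I}=0$, and $d_{i}^{\Gamma}=-d_{j}^{\Gamma}$. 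The first two say that $d$ vanishes in the interiors, and the last says that the interface entries match with opposite signs, which is exactly the claim. Hence it suffices to show $d_{ij,\ell}^{\mathrm{T}}w=0$ for every $w\in U_{ij}$.

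To establish this I would reuse the mechanism of Lemma \ref{lem:D-compatible}. Fix $w\in U_{ij}$. Since the initial constraints satisfy the compatibility condition (\ref{eq:D-compatible}), we have $D^{c}_{ij}w=0$, so $w\in\limfunc{null}D^{c}_{ij}$ and the orthogonal projection fixes it, $\Pi_{ij}w=w$. Because $E_{ij}$ is a projection onto the continuous space $U_{ij}$, we also have $E_{ij}w=w$, and therefore $(I-E_{ij})\Pi_{ij}w=(I-E_{ij})w=0$. Substituting into the definition (\ref{eq:d-def}) of $d_{ij,\ell}$ makes the whole product vanish, giving $d_{ij,\ell}^{\mathrm{T}}w=0$. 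Together with the annihilator characterization above, this proves the proposition.

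The only place that needs care is the bookkeeping in the first step: one must order the interface degrees of freedom on the two sides consistently so that ``$d_{i}^{\Gamma}=-d_{j}^{\Gamma}$'' really is the matching-with-opposite-signs statement, and one must invoke compatibility of the initial constraints so that $\Pi_{ij}$ fixes $U_{ij}$. I do not expect a genuine obstacle. As an equivalent, purely algebraic route, one can read the structure off directly: since $E_{ij}$ projects onto $U_{ij}$, the standard identity that the range of $(I-E_{ij})^{\mathrm{T}}$ equals the orthogonal complement of $\limfunc{null}(I-E_{ij})=U_{ij}$ shows that $(I-E_{ij})^{\mathrm{T}}g\in U_{ij}^{\perp}$ for every $g$; and left-multiplication by $\Pi_{ij}$ keeps the result in $U_{ij}^{\perp}$, because the rows of $D^{c}_{ij}$ are themselves compatible and hence lie in $U_{ij}^{\perp}$. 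Either way the leftmost factor $\Pi_{ij}(I-E_{ij})^{\mathrm{T}}$ in (\ref{eq:d-def}) forces the asserted pattern, independently of the weights in $E_{ij}$ and of $S^{c}_{ij}$.
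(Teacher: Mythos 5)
Your proof is correct and takes essentially the same route as the paper: both arguments reduce the sign pattern to the fact that $d_{ij,\ell}$ annihilates every continuous vector $w\in U_{ij}$, which follows because compatibility of the initial constraints gives $\Pi_{ij}w=w$ while $E_{ij}$ being a projection onto $U_{ij}$ gives $\left(I-E_{ij}\right)w=0$ (this is exactly the content of Lemma \ref{lem:D-compatible}, which the paper cites where you rederive it at the matrix level). The only cosmetic difference is that the paper tests just the $0$--$1$ vectors with a matched pair of unit entries, whereas you characterize the full Euclidean annihilator of $U_{ij}$, which additionally yields that the interior entries of $d_{ij,\ell}$ vanish.
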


\begin{proof}
Consider the vector $w\in W$ that has two entries equal to $1$,
corresponding to a~degree of freedom on the interface, and all other entries
equal to $0$. Using the definition of $d_{ij,\ell }$ (\ref{eq:d-def}) and Lemma \ref{lem:D-compatible}, 
we get $d_{ij,\ell }w_{ij}=L_{ij,\ell }\left( w_{ij}\right) =0$.
The proof is concluded by taking $w_{ij}$ of this form for arbitrary degree of freedom,
all of which satisfy relation (\ref{eq:Lijl}).
\end{proof}

In 2D, one can simply add rows (\ref{eq:d-def}) to the constraint matrix $D$, 
which is equivalent to the method from \cite{Mandel-2007-ASF}. In 3D,
unfortunately, such rows would generally have nonzero entries over all
of the interface of $\Omega _{i}$ and $\Omega _{j}$, including the
edges (where $\Omega _{i}$ and $\Omega _{j}$ intersect other substructures).
Consequently, 
these rows would couple several globs together, and
the matrix $D^{c}D^{c\mathrm{T}}$ would be in general no longer block diagonal 
with one block per glob. To preserve the
block diagonal structure, we have to split each $d_{ij,\ell }$ into one row
that contains the nonzero entries of the face, and one row for each edge
that contains the nonzero entries of that edge. From Proposition \ref{prop:entry-compatible}, 
it follows that these split constraints satisfy the
compatibility condition~(\ref{eq:D-compatible}), 
and thus the space $\widetilde{W}$ is well defined.

\begin{remark}
In the computations reported in Section \ref{sec:numerical}, we drop the
adaptively generated edge constraints in 3D. Then it is no
longer guaranteed that the condition number indicator $\widetilde{\omega }%
\leq \tau $. However, the method is still observed to perform well.
\end{remark}

\section{Parallel framework with global matrices and on top of multifrontal solver}
\label{sec:mumps}

The main purpose of BDDC, just like any other iterative
substructuring method, is to split the problem into subproblems,
which are solved independently on separate nodes in a multiprocessor system.
Therefore, the usual implementation results in independent local problems on
the spaces $W_{i}$ and a small coarse problem \cite{Dohrmann-2003-PSC,Mandel-2003-CBD}. 
Parallel implementation then requires a
fair amount of custom coding. To reduce the amount of new code, 
a~BDDC implementation that uses specially
crafted calls to a~frontal solver to compute almost all quantities on the
substructures was developed \cite{Sistek-2010-BFS}. 
However, the frontal solver implementation needs to construct
a coarse problem, and the programmer needs to handle the parallelism explicitly. 
Fortunately, highly
efficient massively parallel direct solvers exist, and an implementation
based on such solver may avoid dealing with parallel issues completely. 

When there are only corner constraints, i.e. $D^{c}$ is empty, the BDDC
preconditioner (\ref{eq:BDDC-prec})--(\ref{eq:BDDC-Dc}) reduces to
\begin{equation}
M_{BDDC}:r\mapsto\left( I-P\right) ER^{c}\left( A^{c}\right) ^{-1}R^{c%
\mathrm{T}}E^{T}\left( I-P\right) ^{\mathrm{T}}r.  \label{eq:BDDC-simple}
\end{equation}
All coupling between substructures in the matrix $A^{c}$ is concentrated
at corner degrees of freedom, while most computational work rests inside
the subdomains, and an efficient solver should be able to perform it
independently in parallel. Our implementation is based on the multi-frontal
solver MUMPS \cite{Amestoy-2000-MPD}, and numerical results show that this solver
can indeed handle matrices of this type reasonably well. Our MATLAB
implementation also uses global matrices. 
In both implementations, expressions involving sparse matrices are evaluated using vectors 
in the space $\widetilde{W}^c$ just as in the formulas here.

However, if there are any constraints in the globs, one has to solve the
constrained system (\ref{eq:BDDC-system}), and MUMPS cannot do this
directly. Thus, we will transform (\ref{eq:BDDC-system}) to a symmetric,
positive definite system which can be solved by MUMPS.

One way to solve system (\ref{eq:BDDC-system}) is to introduce the
orthogonal projection $\Pi$ onto the nullspace of $D^{c}$, which is given by
\begin{equation}
\Pi=I-D^{c\mathrm{T}}\left( D^{c}D^{c\mathrm{T}}\right) ^{-1}D^{c}.
\label{eq:projection}
\end{equation}
As opposed to the localised analogue $\Pi _{ij}$ used in the previous section,
$\Pi :\widetilde{W}^c \rightarrow \widetilde{W}$ is a~global operator.
Due to the block structure of $D^{c}$, where each block corresponds to a~different glob,
and because each degree of freedom belongs to at most one glob by definition, 
the construction of $\Pi$ can be performed in parallel.

Using projection $\Pi$, the saddle point problem (\ref{eq:BDDC-system}) is equivalent to
\begin{equation}
\Pi A^{c}\Pi w_{c}=\Pi R^{c\mathrm{T}}E^{\mathrm{T}}\left( I-P\right) ^{%
\mathrm{T}}r,\quad w_{c}\in\limfunc{null}D^{c}.  \label{eq:BDDC-singular}
\end{equation}
However, the operator $\Pi A^{c}\Pi$ is singular for nontrivial $D^{c}$, so we
solve instead a modified system
\begin{equation}
\left[ \Pi A^{c}\Pi+t(I-\Pi)\right] w_{c}=\Pi R^{c\mathrm{T}}E^{\mathrm{T}%
}\left( I-P\right) ^{\mathrm{T}}r,  \label{eq:BDDC-regular}
\end{equation}
where $t>0$ is a stabilization parameter, e.g. chosen as the maximal diagonal
entry in $A^{c}$. Now, the operator $\Pi A^{c}\Pi+t(I-\Pi)$ is regular,
while the solutions of the systems~(\ref{eq:BDDC-system}) and (\ref%
{eq:BDDC-regular}) are the same.

The projection $\Pi$ enforces constraints that couple all degrees of freedom
on corresponding globs. For this reason, the action of $\Pi$ introduces new
off-diagonal elements (called fill-in) in the projected matrix $\Pi A^{c}\Pi+t(I-\Pi)$. 
This is illustrated in Figure \ref{fig:projections},
where new dense off-diagonal blocks between globs appear. 
Because of these blocks, the performance of sparse direct
solvers would seriously deteriorate.
A~sufficient remedy for this issue is proposed in the next section.

\section{Generalized change of variables}
\label{sec:generalized}

To reduce the fill-in corresponding to the enforcing of the constraints
following (\ref{eq:projection})--(\ref{eq:BDDC-singular}), we revisit and
generalize the change of variables proposed in \cite%
{Klawonn-2006-PID,Li-2006-FBB}. On each substructure $i$, consider first the
change of variables by the transformation
\begin{equation}
w_{i}^{\mathrm{new}}=\bar{H}_{i}w_{i},\quad \bar{H}_{i}=
\left[
\begin{array}{cc}
\bar{U} & \bar{V} \\
0 & I
\end{array}
\right] .  \label{eq:change-variables}
\end{equation}
That is, the averages (given as rows of a matrix $H_{i}^{\mathrm{avg}} = 
\left[
\begin{array}{cc}
\bar{U} & \bar{V}
\end{array}
\right] $)   
are at the beginning of
the vector $w_{i}^{\mathrm{new}}$, replacing the variables in $w_{i}$.
The remaining variables in $w_{i}$ are unchanged. We assume that the vectors
of weights in the averages are linearly independent, that is, $H_{i}^{\mathrm{avg}}$ 
has a full row rank. While this assumption guarantees that
there exists a square submatrix of $H_{i}^{\mathrm{avg}}$ consisting of
linearly independent columns, this does not necessarily need to be the
matrix $\bar{U}$, and so the inverse transformation $\bar{H}_{i}^{-1}$ may not exist.
To correct this, we compute the $QR$ decomposition of $H_{i}^{\mathrm{avg}}$ 
with column pivoting to choose which variables in $w_{i}$ will be replaced by the averages. 
Decompose
\begin{equation}
H_{i}^{\mathrm{avg}}=Q
\begin{bmatrix}
U & V
\end{bmatrix}
K,
\end{equation}
where $Q$ is an orthogonal matrix, $U$ is an upper triangular matrix, 
and $K$ is a~permutation matrix. We now define the generalized change of
variables by
\begin{equation}
w_{i}^{\mathrm{new}}=H_{i}w_{i},\quad H_{i}=\left[
\begin{array}{cc}
U & V \\
0 & I
\end{array}
\right] K.  \label{eq:general-change-variables}
\end{equation}
Now $H_{i}^{-1}$ exists, and the inverse change of variables is defined as
\begin{equation}
T_{i}=H_{i}^{-1}=K^{-1}\left[
\begin{array}{cc}
U^{-1} & -U^{-1}V \\
0 & I%
\end{array}
\right] .  \label{eq:T}
\end{equation}
The matrix $U$, though invertible, is not guaranteed to be well conditioned.
This is a well-known problem in $QR$ decomposition \cite{Langou-2008-PC}.
However, we can drop the rows of $\left[ U\text{ }V\right] $ where the
diagonal entry of $U$ is small, and one can argue that the constraints that
were transformed into rows with negligible leading entry are (numerically)
redundant. Our implementation of the change of variables uses $QR$
decomposition by the LAPACK routine DGEQP3.

To compare with the change of variables from \cite{Klawonn-2006-PID,Li-2006-FBB}, 
consider the case where there is just one
average with unit weights. Then
\begin{equation}
H_{i}^{\mathrm{avg}}=\left[ 1,\dots,1\right] ,\quad U=\left[ 1\right] ,\quad
K=I,\quad H_{i}=\left[
\begin{array}{cccc}
1 & 1 & \ldots & 1 \\
& 1 &  &  \\
&  & \ddots &  \\
&  &  & 1
\end{array}
\right] ,
\end{equation}
and we have the change of variables
\begin{equation}
w_{i}=T_{i}w_{i}^{\mathrm{new}},\quad T_{i}=H_{i}^{-1}=\left[
\begin{array}{cccc}
1 & -1 & \ldots & -1 \\
& 1 &  &  \\
&  & \ddots &  \\
&  &  & 1
\end{array}
\right] ,  \label{eq:TB-example-general}
\end{equation}
while the transformation of variables by \cite{Klawonn-2006-PID,Li-2006-FBB}
is defined as
\begin{equation}
w_{i}=\left[
\begin{array}{cccc}
1 & -1 & \ldots & -1 \\
1 & 1 &  &  \\
1 &  & \ddots &  \\
1 &  &  & 1
\end{array}
\right] w_{i}^{\mathrm{new}}.
\end{equation}

With the change of basis, the BDDC preconditioner can be written as
\begin{equation*}
M_{BDDC}:r\longmapsto u=\left( I-P\right) ETR\overline{w},\quad\overline {w}%
\in\widetilde{W}:\quad\left\langle AT\overline{w},Tz\right\rangle
=\left\langle r,\left( I-P\right) ETz\right\rangle ,\quad\forall z\in%
\widetilde{W},
\end{equation*}
where $T=\limfunc{diag}\left[ T_{i}\right] $. Thus, $A$ is replaced by the
transformed matrix $T^{\mathrm{T}}AT$, and, by assembly at corners following
(\ref{eq:c-subassembly}), $A^{c}$ becomes $R^{c\mathrm{T}}T^{\mathrm{T}%
}ATR^{c}$. Then, Lemma \ref{lem:saddle} yields the matrix form of the
algorithm: solving the system
\begin{equation}
\begin{array}{ccccc}
R^{c\mathrm{T}}T^{\mathrm{T}}ATR^{c}\overline{w}_{c} & + & \overline{D}%
^{cT}\lambda & = & R^{c\mathrm{T}}T^{\mathrm{T}}E^{\mathrm{T}}r, \\
\overline{D}^{c}\overline{w}_{c} &  &  & = & 0,%
\end{array}
\label{eq:BDDC-ch-var}
\end{equation}
followed by computation of the approximate solution $u\in\widehat{W}$ by 
$u=\left( I-P\right) ETR^{c}\overline{w}_{c}$. Here, the matrix 
$\overline{D}^{c}=D^{c}T$ 
is much sparser than $D^{c}$ thanks to the change of
variables. 
It couples only the new explicit degrees of freedom on each subdomain
and thus has only one $+1$ and one $-1$ entry on each row. In fact, the
construction of $\overline{D}^{c}$ is similar to the construction of the
operator $B$ used in FETI methods. In computations, $\overline{D}^{c}$ can
be constructed directly without using either $D^{c}$ or $T$, knowing only
which pairs of the (explicit) interface degrees of freedom should be coupled after the
change of basis.

Instead of solving the saddle point problem (\ref{eq:BDDC-ch-var}) directly,
we now use the projection as in~(\ref{eq:BDDC-regular}) with $D^{c}$
replaced by $\overline{D}^{c}$, resulting in a~new projection~$\overline{\Pi}$.
The sparsity structure of $\overline{\Pi}$ and of the projected
matrix $\overline{\Pi}R^{c\mathrm{T}}T^{\mathrm{T}}ATR^{c}\overline{\Pi}+%
\overline {t}(I-\overline{\Pi})$ are illustrated Figure \ref{fig:projections}%
. As can be observed, change of basis preceding the projection can lead to
much lower fill-in in the off-diagonal blocks of the projected matrix.

The BDDC preconditioner can be finally rewritten in the algebraic form, 
which is actually used in our implementations, as follows.

\begin{algorithm}
The action of the BDDC preconditioner $M_{BDDC}:r\mapsto w$ with the
generalized change of variables consists of solving the system
\begin{equation}
\widetilde{A}\overline{w}_{c}=\overline{\Pi}R^{c\mathrm{T}}T^{\mathrm{T}}E^{\mathrm{T}}r,
\end{equation}
where
\begin{equation}
\widetilde{A}=\overline{\Pi}R^{c\mathrm{T}}T^{\mathrm{T}}ATR^{c}\overline{%
\Pi }+t(I-\overline{\Pi}),  \label{eq:A-tilde}
\end{equation}
with an arbitrary $t>0$, followed by $w=\left( I-P\right) ETR^{c}\overline{w}%
_{c}$.
\end{algorithm}

\begin{remark}
Since the transformation of variables changes averages into separate degrees
of freedom, one can treat these degrees of freedom as corners and assemble
them just as in~\cite{Klawonn-2006-PID,Klawonn-2006-DPF,Li-2006-FBB} to make
all constraints primal. This gives no additional fill-in beyond the one
caused by the change of variables, i.e., replacing $A^{c}$ by $R^{c\mathrm{T}%
}T^{\mathrm{T}}ATR^{c}$. In the adaptive method (Section \ref%
{sec:adaptive-constraints}), the corners are already set and used to compute
the constraints to be added adaptively. Treating all constraints as corners
then requires redefining which variables are corners. This is not supported
in the code described here. See Section \ref{sec:implement} for more details.
\end{remark}

\section{Implementation}

\label{sec:implement}

We have implemented the proposed method in \textsc{Matlab}. Later, we
have developed also a~parallel version using Fortran 90 programming language
and MPI.

First, we have implemented the BDDC preconditioner based on the formulation (%
\ref{eq:BDDC-system}). In the case of corner constraints only, i.e. $D^{c}$ is
empty, the method is reduced to solving a~problem with matrix $A^{c}$ in
each iteration. In the current version, we rely on the parallel direct
solver MUMPS \cite{Amestoy-2000-MPD} (version 4.8.4) for this purpose.

In the implementation, two separate instances of MUMPS are necessary -- one
for solving problems with matrix $A^{c}$ and another for a realization of
the operator $I-P$ of the discrete harmonic extension in (\ref{eq:harm-E})
globally. 
The latter is equivalent to solving an independent discrete Dirichlet problem on each
subdomain.

In the case of a nontrivial matrix $D^{c}$, i.e. for additional constraints on
edges and/or faces, explicit change of variables with projection (Section \ref{sec:generalized}) 
is performed in parallel to form the distributed
sparse matrix (\ref{eq:A-tilde}), which is then supplied to MUMPS. We have
observed a~great advantage in projecting the matrix after the change of
variables compared to the direct projection on $\mathrm{null}\ D^{c}$. It
significantly decreases the computational time and memory consumption due to 
the reduced fill-in, as
described in Section \ref{sec:generalized}. In our experience, the 
amount of extra work needed for the transformation and the projection is typically
only a~small fraction of the time saved by the lower number of PCG iterations,
compared to the case of corner constraints only.

Using the projection instead of re-assembling the matrix after the
change of variables allows us to store the sparse matrix in memory only
once, and use it in the preconditioner as well as in the PCG method, which
is formulated to run on vectors from the space $\widetilde{W}^{c}$.
For the preconditioner, new entries
arising from the transformation and projection are stored in the memory
behind the original matrix and the convention of repeated indices allowed by
MUMPS is exploited.

Later, the adaptive selection of constraints described in Section \ref{sec:adaptive-constraints} 
has been added to the implementation. As the
parallelization of solving the generalized eigenvalue problems (\ref{eq:eig-matrix}) 
on pairs of adjacent subdomains does not follow the scheme
of the natural parallelization by subdomains, this part of the code has been
written as a self-standing module that just passes the constraints to the
main BDDC solver. Multiplication by $S^{c}_{ij}$ in the eigenvalue problem (\ref{eq:eig-matrix}) 
is implemented by performing the interior correction on each of
the two adjacent subdomains separately, and only the resulting vectors are assembled;
thus, the matrices $S^{c}_{ij}$ and $A^{c}_{ij}$ for the two adjacent substructures
are not formed explicitly.

\section{Numerical results}
\label{sec:numerical}

We have tested the adaptive algorithm on several
three-dimensional problems of linear elasticity coming from engineering
practice. As a consistency check, we have also tested the method in two
dimensions with essentially the same results as in~\cite{Mandel-2007-ASF}.
The computations were done in \textsc{Matlab} and by the parallel
implementation described in Section~\ref{sec:implement}. 
In \textsc{Matlab}, the generalized
eigenvalue problems for pairs of adjacent substructures were solved by
explicit construction of the matrices and by standard methods for 
symmetric eigenvalue problems. 
We have also tested both the \textsc{Matlab} and the \textsc{C} versions 
of the LOBPCG algorithm~\cite{Knyazev-2001-TOP}. The
averaging operator was constructed with weights proportional to the diagonal
entries of the substructure matrices before elimination of interiors.

The first problem is a nozzle box of a \v{S}KODA steam turbine $28$ MW for
the electric power plant Nov\'{a}ky, Slovakia, loaded by steam pressure. The
body of the nozzle box was discretized using $2696$ isoparametric
quadratic finite elements with $40,254$ degrees of freedom and decomposed
into $16$ substructures with $37$ corners, $19$ edges, and $32$ faces (see Fig.~\ref{fig:jettube}). 
Convergence of the algorithm with non-adaptive
constraints is displayed in Table~\ref{tab:jettube-nonadaptive}. Note that
the corner coarse degrees of freedom were not sufficient to guarantee convergence. 
Where ``3eigv'' is added, constraints corresponding to three dominant eigenvalues 
are added at each face.
This choice leads to the same number of constraints as using simple arithmetic averages.
Comparing the last two rows in Table~\ref{tab:jettube-nonadaptive},
we can see that constraints obtained from the adaptive
algorithm work quite better than arithmetic averages. Our explanation is
that such constraints might approximate better the direction of global
eigenvectors corresponding to the extreme eigenvalues. Table~\ref{tab:jettube-adaptive} 
then contains results obtained using the adaptive
selection of constraints. Each row corresponds to a different value of the
threshold $\tau$, i.e. the target bound of the condition number indicator $\widetilde{\omega }$. 
All eigenvectors corresponding to eigenvalues greater or
equal to $\tau$ were used to generate adaptive constraints. Comparing the
results in Tables~\ref{tab:jettube-nonadaptive}--\ref{tab:jettube-adaptive},
we can see that the adaptive method leads to a redistribution of the number of
constraints on different faces. 
For example, with $\tau=20$, the total
number of constraints is still lower than by using arithmetic averages on all faces, 
while the number of iterations is improved by almost $25\%$ and the condition number estimate$%
~\kappa$ is improved by more than $50\%$.

The second problem is a beam with a mesh refinement around a notch.
It is discretized using $245,687$ tetrahedral finite elements with $143,451$
degrees of freedom, and decomposed into $8$~substructures with $31$ corners,
$18$ edges, and $19$ faces (see Fig.~\ref{fig:beam}). The results with
non-adaptive constraints are summarized in Table~\ref{tab:beam-nonadaptive},
and results of the adaptive method are presented in Table~\ref{tab:beam-adaptive}.
Comparing these two tables, we can see that, similarly as for the nozzle box
problem, doubling the number of constraints reduces number of iterations to
a half. Nevertheless, for both problems, the adaptive algorithm leads to a
relatively small improvement in terms of the number of iterations and of the condition
number estimate. This indicates that for these problems the simple
arithmetic averages already work well enough, and there are no interfaces that
would require extra work - the quality of the decomposition is uniform, as
seen in Figs.~\ref{fig:jettube}--\ref{fig:beam}.

On the other hand, the power of the adaptive algorithm seems to be very beneficial 
for finite element discretization with bad aspect ratios of elements. 
An example of such problem is a
bridge construction discretized by $39,060$ hexahedral finite elements with
$157,356$ degrees of freedom, and distributed into $16$ substructures with 
$250$ corners, $30$ edges, and $43$ faces (see Fig.~\ref{fig:bridge}). The
results are summarized in Tables~\ref{tab:bridge-nonadaptive}--\ref{tab:bridge-adaptive}. 
Comparing the last two rows in Table~\ref{tab:bridge-nonadaptive} 
we can see, that relatively poor convergence with
arithmetic averages improves quite significantly when arithmetic averages
over faces are replaced by the same number of adaptive averages. Moreover,
from Table~\ref{tab:bridge-adaptive} we see that, e.g., doubling the number
of constraints with $\tau=5$ decreases the number of iterations more than
six times, and with $\tau=2$ the number of iterations is reduced more than
ten times while the number of constraints increases approximately four times.

In order to test the performance of the algorithm in the presence of jumps
in material coefficients, we have created a problem of a cube with material
parameters $E=10^6$ Pa and $\nu=0.45$, penetrated by four bars with
parameters $E=2.1\cdot10^{11}$ Pa and $\nu = 0.3$, consisting of $107,811$
degrees of freedom, and distributed into $8$ substructures with $30$
corners, $16$ edges, and $15$ faces (see Fig.~\ref{fig:compo8} and note that the
bars cut the substructures only through faces). Similar problems are solved
in practice to determine numerically (anisotropic) properties of composite
materials. Comparing the results in Tables~\ref{tab:compo8-nonadaptive} and~%
\ref{tab:compo8-adaptive} we can see, that with $\tau=10,000$ only $10$
additional averages over faces are used to decrease the number of iterations
$2.6$ times. 
With $\tau=2$, the number of iterations is decreased $10$ times
compared to the non-adaptive algorithm with arithmetic averages over all globs (c+e+f), 
whereas the number of constraints is increased less than $3$ times.

To test the parallel behaviour of the MUMPS solver at our applications, we
run the solver only with corner coarse degrees of freedom on a benchmark
problem consisting of cubic subdomains, the number of which is growing in two
dimensions (see Fig.~\ref{fig:planar_cubes}). Since the size of the
subdomains is fixed, the problem fixed at one side and loaded at the opposite
side is changing its nature, and consequently, some growth in number of iterations is
expected. The sequence of problems is run on an increasing number of processors,
which matches the number of subdomains.
Presented computations were performed on 1.5~GHz Intel Itanium~2
processors of SGI Altix~4700 computer in CTU Supercomputing Centre, Prague.

We can see in Table~\ref{tab:scaling_cornersHh8}, that after a jump in times
between 16 and 25 subdomains related probably to the computer's
architecture, the times of analysis, factorization, as well as per one
iteration remain almost constant. The ``total wall time'' includes also the
second factorization of MUMPS for computing the discrete harmonic extensions
and all I/O operations.

In the presented algorithm, a considerable amount of work is spent by
generating the adaptive constraints. This part, which consists of solving local
eigenproblems, may eventually dominate the whole computation. Each
of these eigenproblems is common to a~pair of subdomains, so the 
parallelism is different from the ``natural'' subdomain-wise parallelism of domain decomposition.
For this reason, an independent distribution of eigenproblems among processors
is performed, and each processor which solves an eigenproblem is linked with
the two processors which store the data of subdomains within the pair.

An investigation of scaling of this part of the implementation on a~variable number of processors
was performed for the problem of turbine nozzle box with 16 subdomains. 
In Fig.~\ref{fig:scaling_eigenproblems_select},
the summary for the sequence of $2^{k},\ k = 0,1,2,\dots,5$, processors is shown.

\section{Conclusion}
\label{sec:conclusion}

The adaptive BDDC method has been presented. The paper contains several
original contributions. First, the definition of space where BDDC runs is
given as the nullspace of a~global matrix of constraints. For an efficient and
straightforward implementation of this formulation, a generalization of
the change of variables is proposed. This allows an efficient handling of multiple
arbitrary constraints on a substructure face. This functionality is required
for the implementation of constraints that are generated adaptively. The
adaptive selection of constraints from \cite{Mandel-2007-ASF} has been
reformulated in a mathematically equivalent way to use only the operators of
BDDC and to match the overall approach of the rest of this paper to minimize
programming requirements. The adaptive method is based on simultaneous
solution of generalized eigenvalue problems defined for each face in the
decomposition. The eigenvalues serve as a condition number indicator, so the
minimal number of constraints is added to guarantee that the condition
number indicator is below a given threshold. The corresponding eigenvectors
are used to derive the coefficients of the constraints. Numerical
experiments confirm that the eigenvalues provide a good prediction of the
final condition number of the preconditioned operator.

A parallel implementation of the method has been developed and presented. It
is based on a global formulation of the matrix of the BDDC preconditioner,
and it is built on top of solver MUMPS, which provides most of the
parallelism and minimizes custom coding. The implementation has been tested
on a number of problems of 3D elasticity. Results for several real world
problems are included.

In our experiments, adaptive BDDC has shown to be quite powerful. Many
times, it has been able to save the situation for poorly selected
corners, even in the case of disconnected subdomains, the situation often
faced in real applications of domain decomposition when using graph
partitioners. Adaptive BDDC is able to handle very ill-conditioned problems
(e.g. problems with jumps in coefficients, complicated geometries with
deformed elements, etc.), which are almost impossible to solve by standard
BDDC method using only arithmetic averages on edges and faces. Such problems
would either require a~prohibitive number of PCG iterations or may not
converge at all. This class of problems is the target application of the
proposed method, as the extra cost of generating the constraints adaptively
is not negligible and would not pay for well-conditioned problems.

The solution of local eigenproblems by LOBPCG in generation of the adaptive
constraints requires many iterations and accounts for most of the time 
for some problems. A~suitable preconditioning of these eigenproblems to reduce the number
of LOBPCG iterations will be studied elsewhere.

\section*{Acknowledgement}

We would like to thank to Jaroslav Kruis, Jaroslav Novotn\'{y}, and Jan Le%
\v{s}tina for providing us with real engineering problems. We are also
grateful to Marian Brezina for visualization of some meshes by his own
package \emph{MyVis}. Part of this work was done when Jakub \v{S}\'{\i}stek was
visiting University of Colorado Denver.

This work was supported in part by National Science Foundation under grants 
\mbox{DMS-0713876} and \mbox{CNS-0719641}, by Czech Science Foundation
under grant GA \v{C}R 106/08/0403, and by 
Academy of Sciences of the Czech Republic under grant \mbox{AV0Z10190503}.

{\raggedright
\bibliographystyle{elsart-num-sort}
\bibliography{../../bibliography/bddc,adaptive}
}

% figures and tables

\newpage

\begin{figure}[tbp]
\begin{center}
\includegraphics[width=70mm]{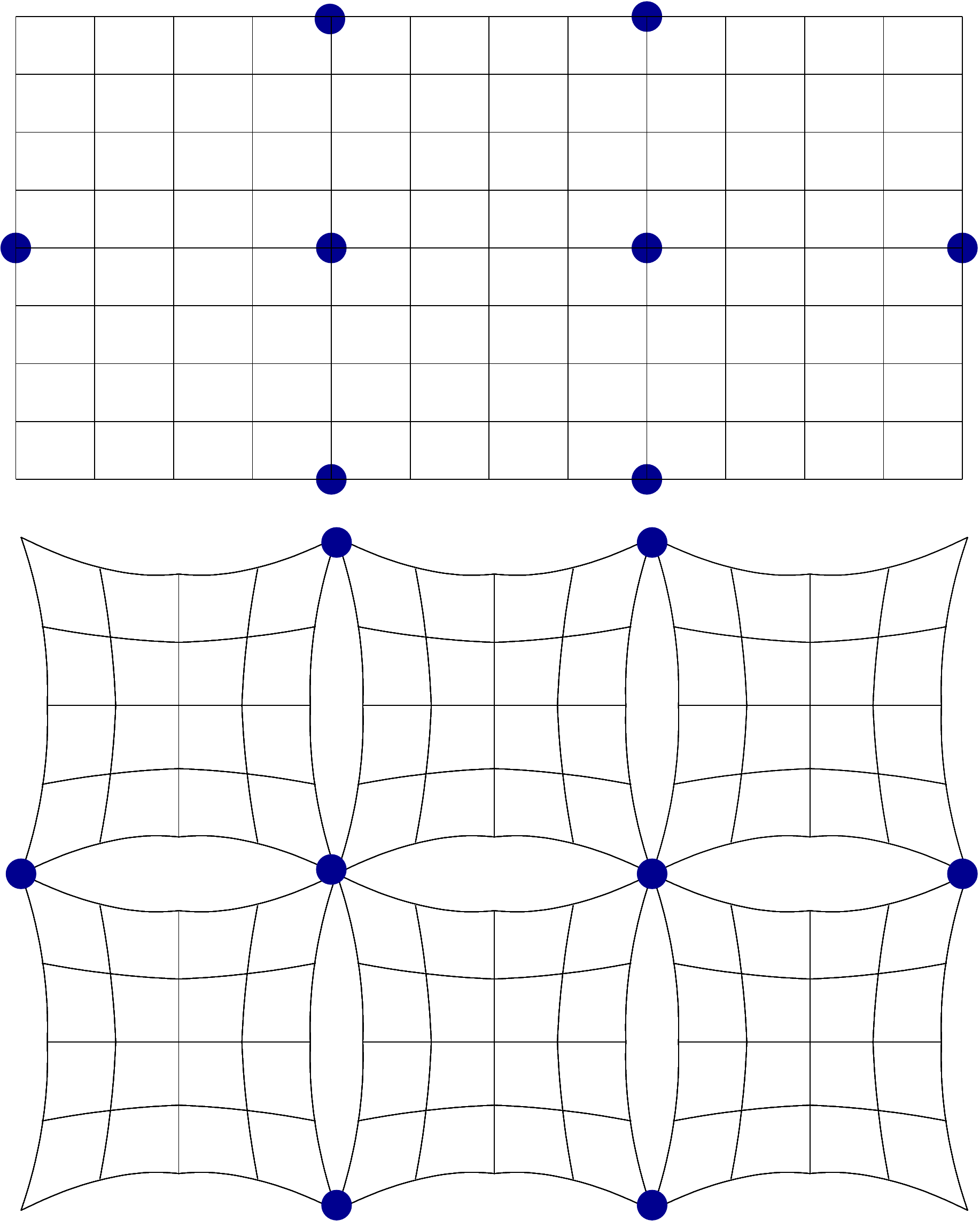}
\end{center}
\caption{Example of an actual mesh (top) and the corresponding fictitious
mesh for construction of space $W^{c}$ (bottom), the dots mark corners.}
\label{fig:virtual_mesh}
\end{figure}

\begin{figure}[tbp]
\begin{center}
\includegraphics[width=80mm]{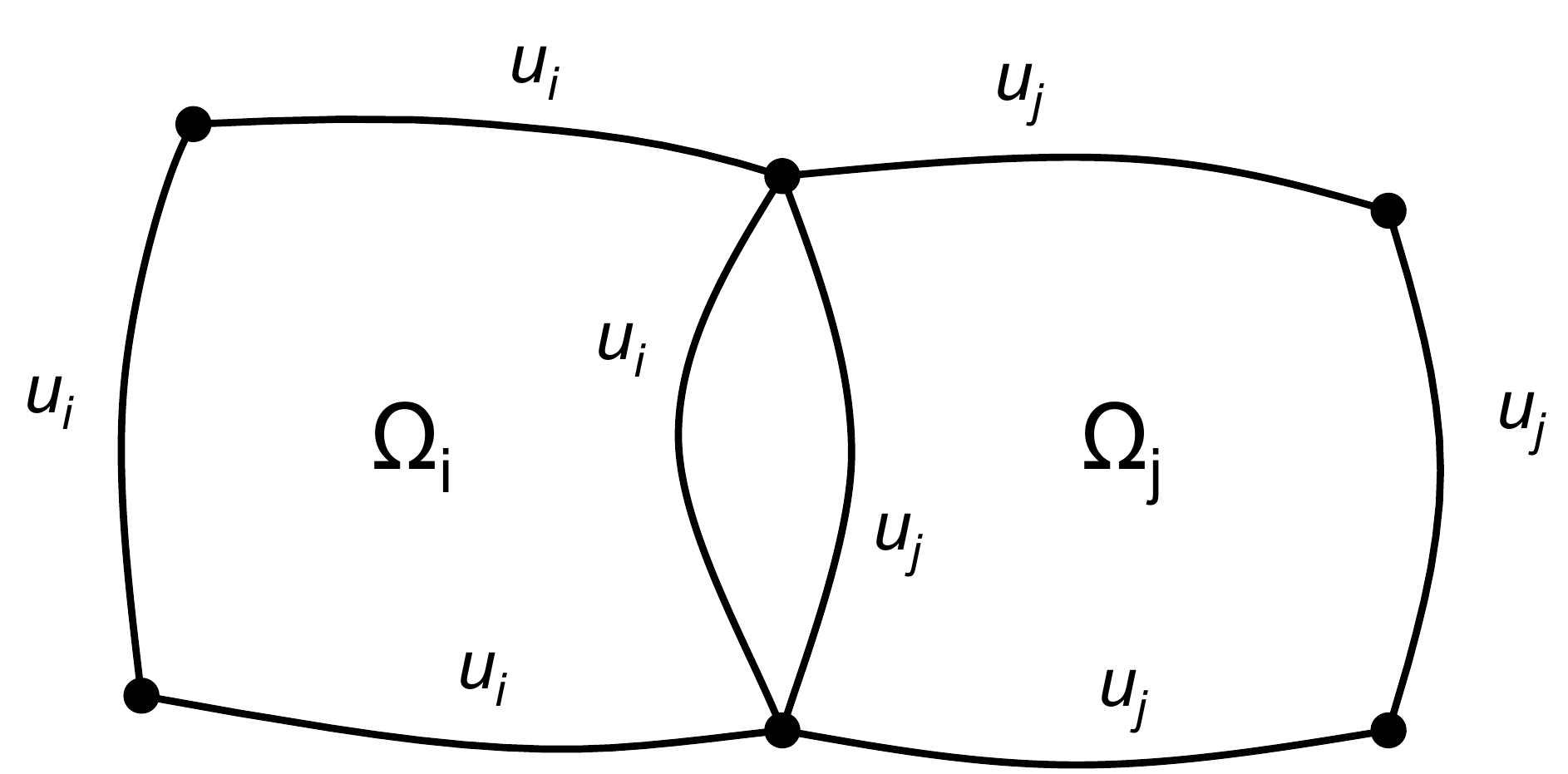}
\end{center}
\caption{Illustration of two adjacent subdomains in 2D for the computation
of the condition number indicator. }
\label{fig:2domains}
\end{figure}

\begin{figure}[tbp]
\begin{center}
\begin{tabular}{cc}
\includegraphics[width=2in]{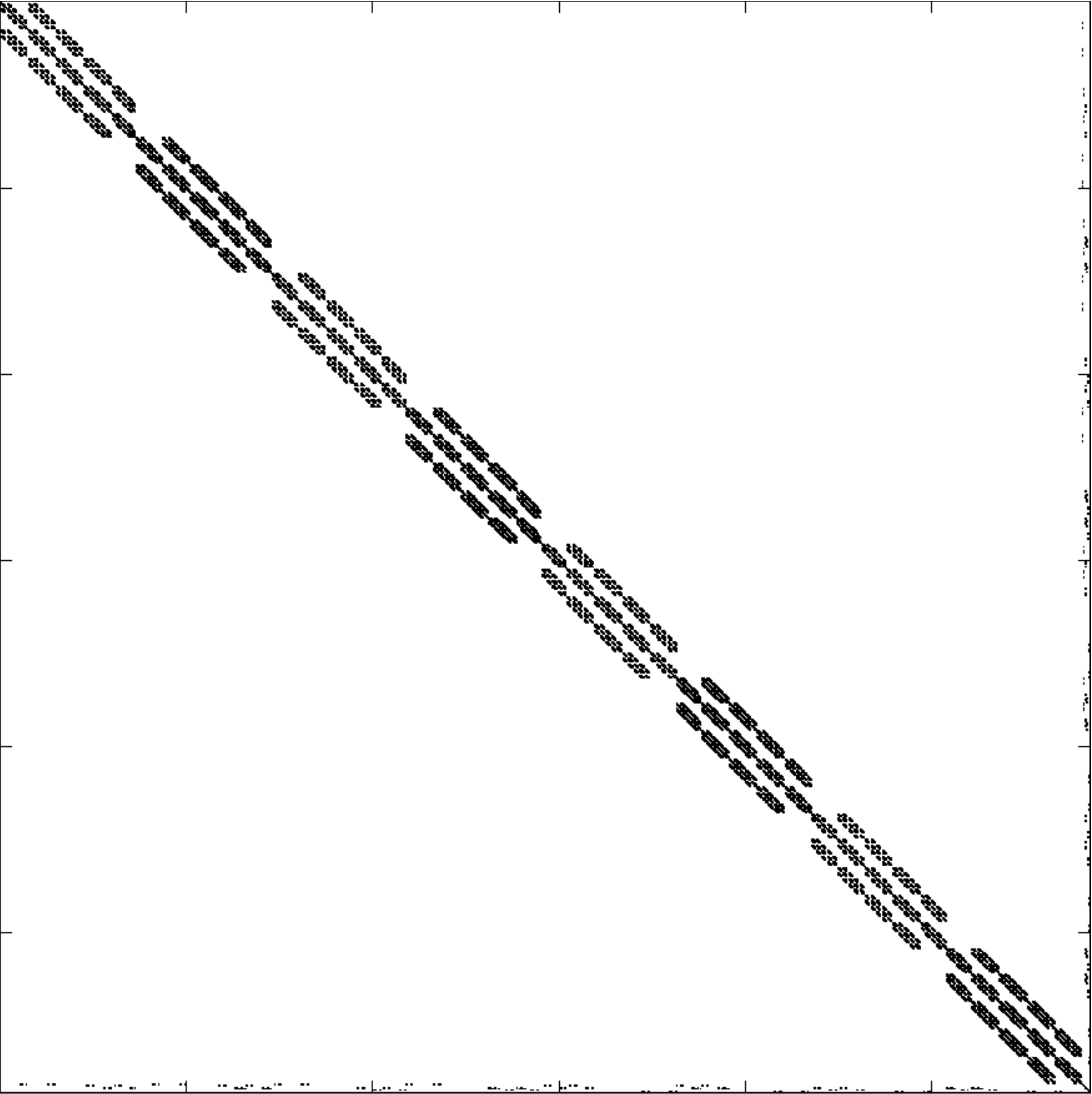} & %
\includegraphics[width=2in]{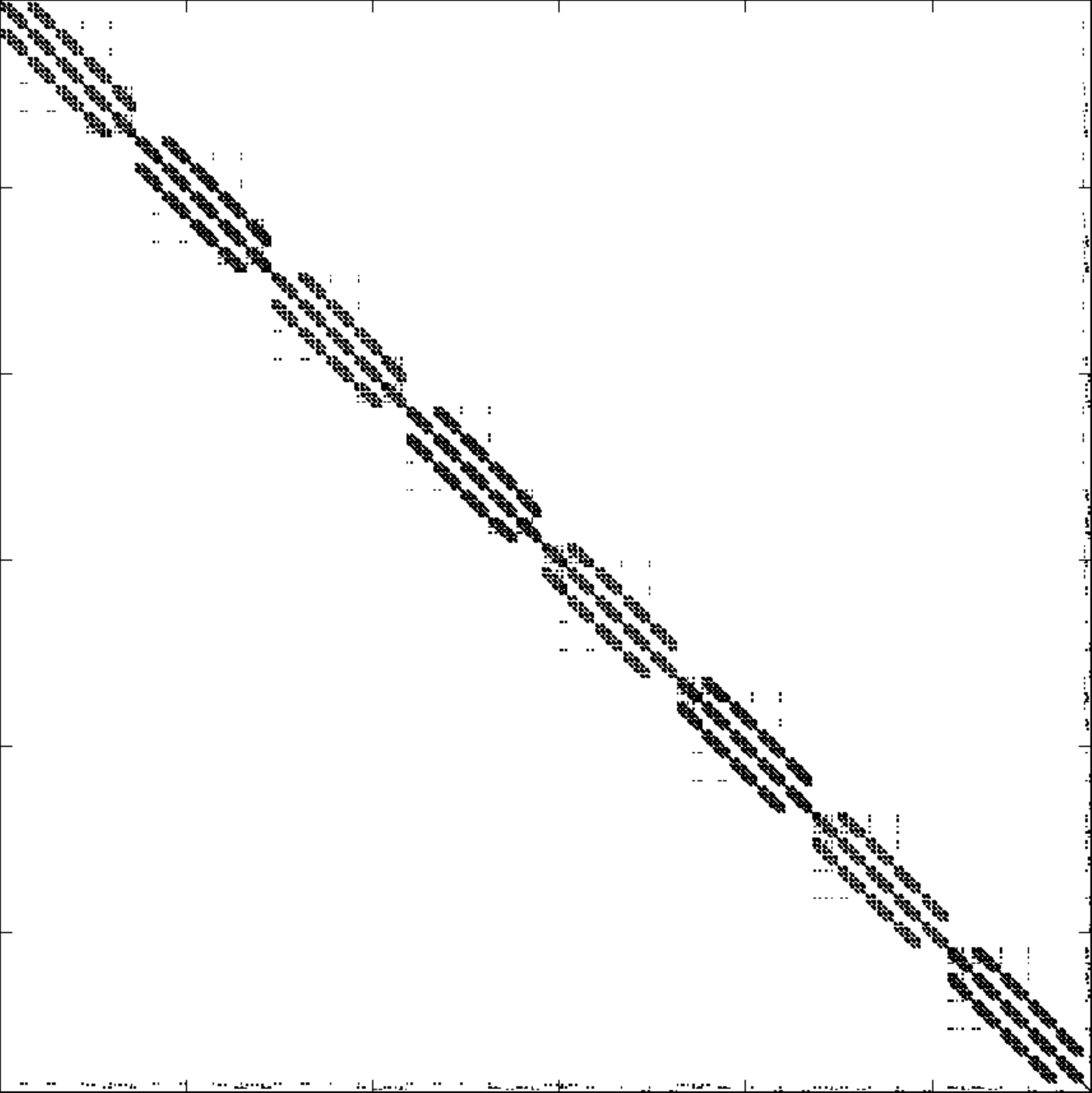} \\
{\small (a) nnz = $136,937$} & {\small (b) nnz = $141,773$} \\
&  \\
\includegraphics[width=2in]{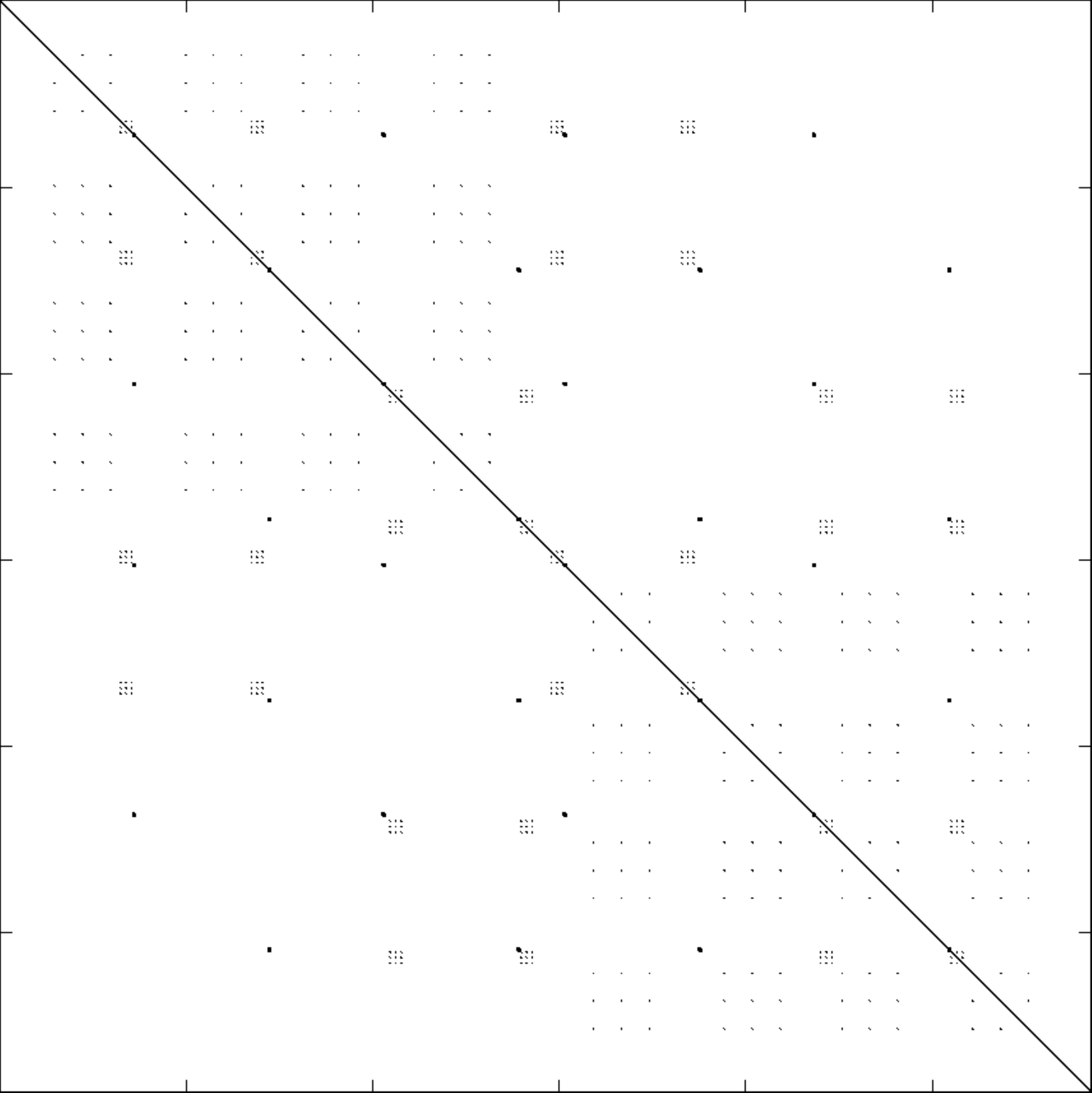} & %
\includegraphics[width=2in]{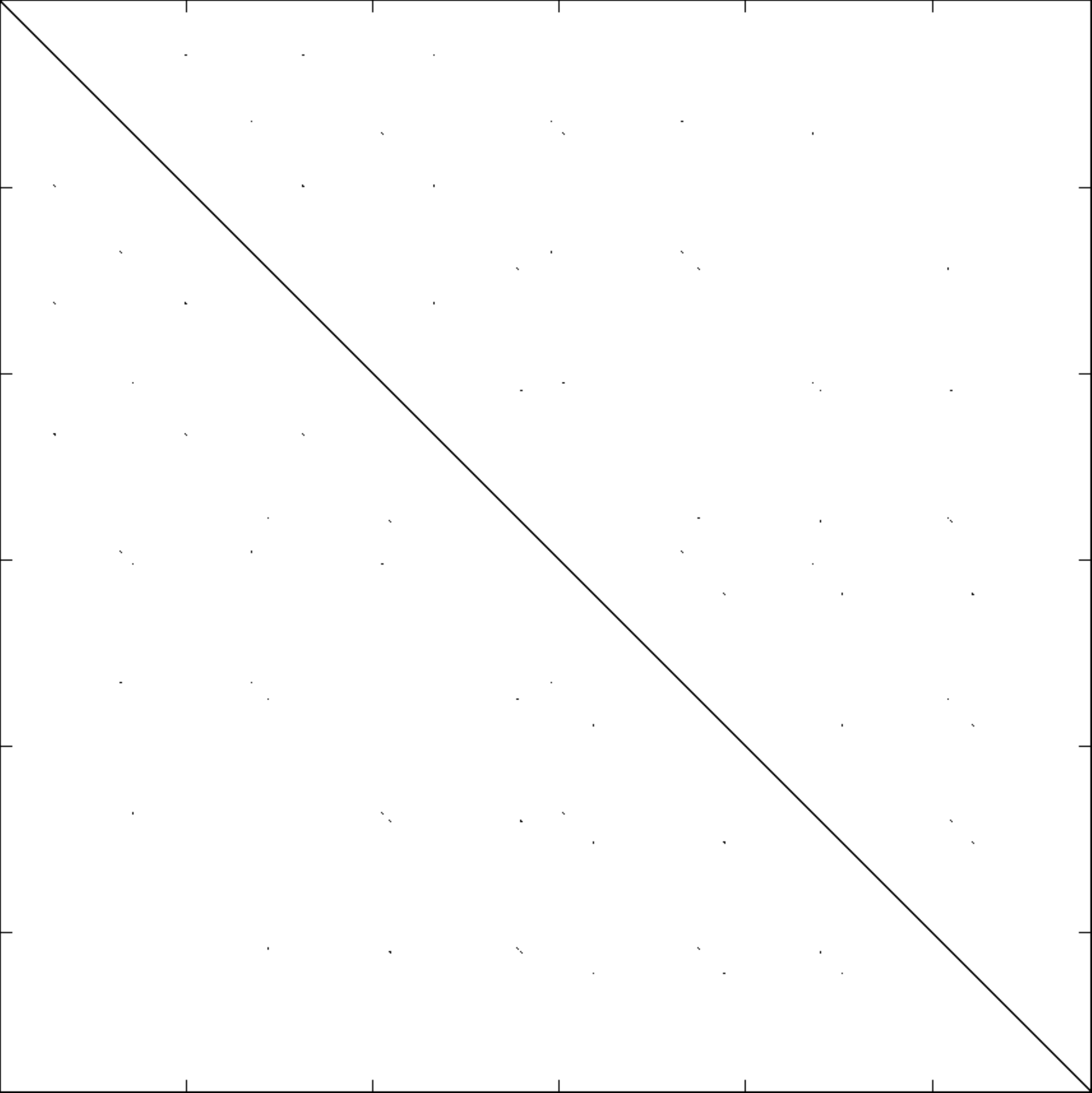} \\
{\small (c) nnz = $5301$} & {\small (d) nnz = $3141$} \\
&  \\
\includegraphics[width=2in]{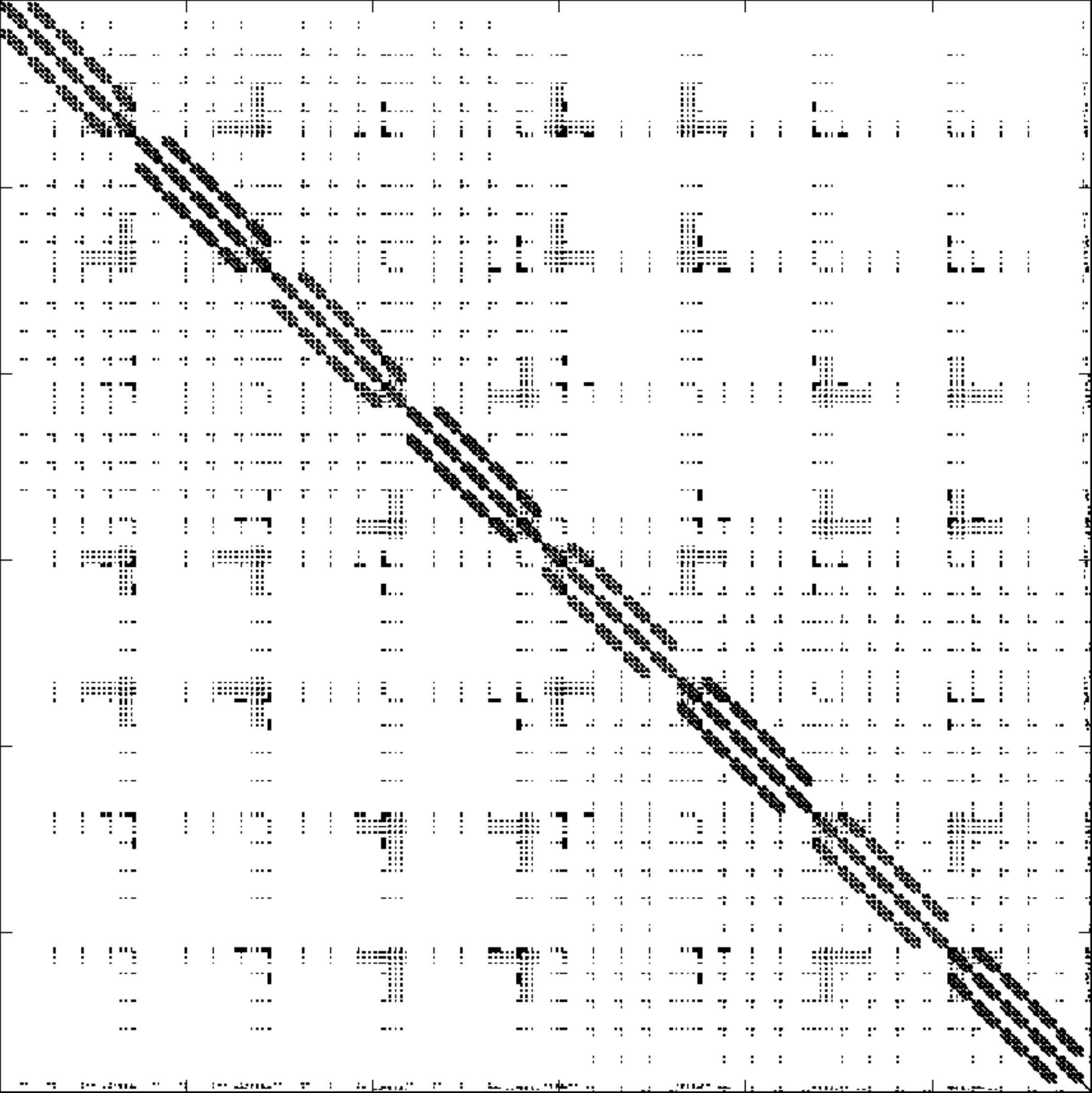} & %
\includegraphics[width=2in]{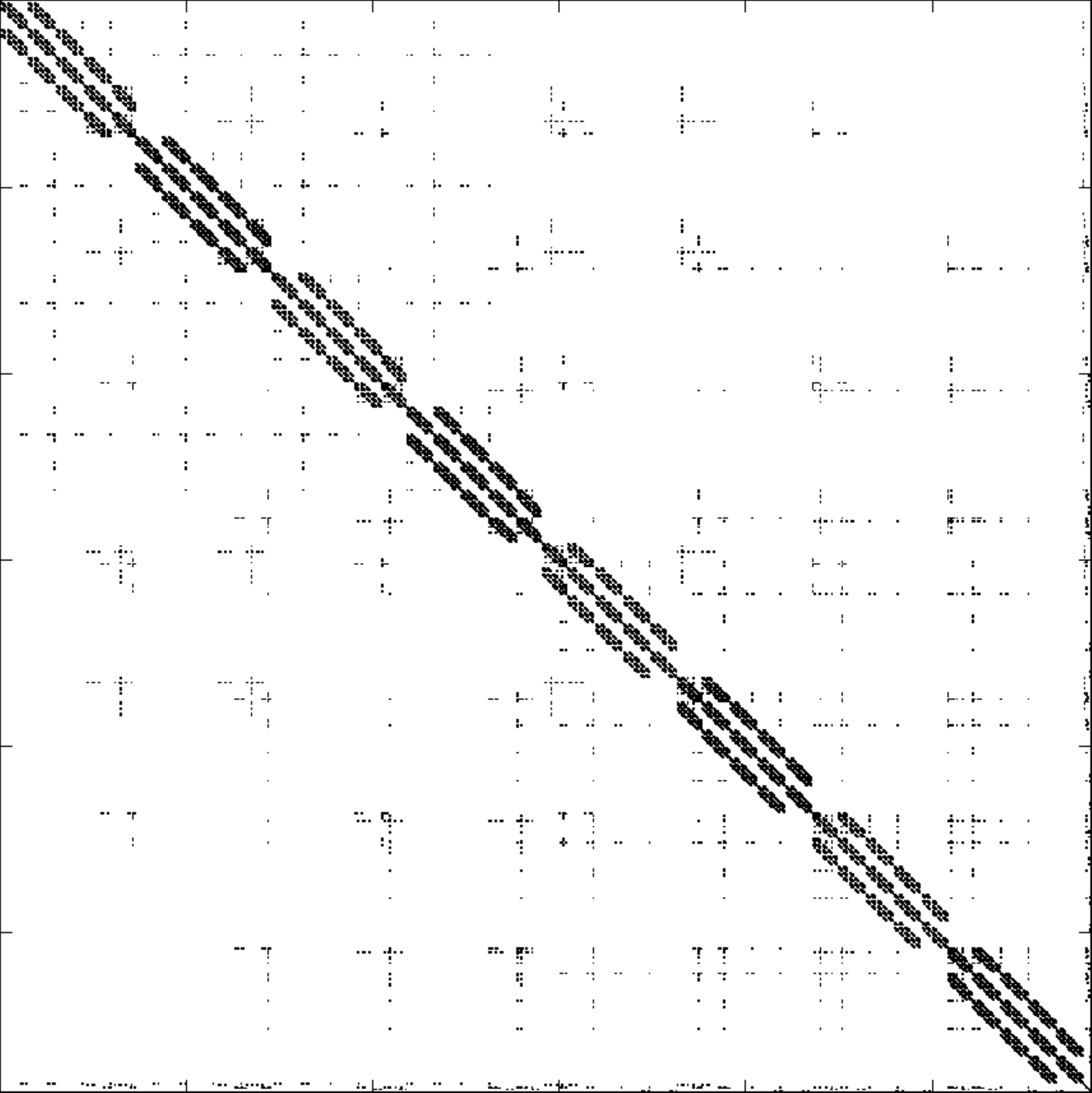} \\
{\small (e) nnz = $228,954$} & {\small (f) nnz = $156,982$}%
\end{tabular}%
\end{center}
\caption{Sparsity patters for 3D elasticity problem for a cube decomposed
into $2\times2\times2$ substructures ($H/h=4$) with $7$ corners, $6$ edges, $%
12$ faces, and $2187$ degrees of freedom. The matrix $D^{c}$ (resp. $%
\overline{D}^{c}$) contains $54$ rows to enforce the equality of arithmetic
averages over edges. The matrices (a), (c), (e) are in the original degrees
of freedom, while (b), (d), (f) are after the change of variables (\ref{eq:T}): 
the operators ${A}^{c}$ in panel (a) and $R^{c\mathrm{T}}T^{%
\mathrm{T}}ATR^{c}$ in panel (b), projections $\Pi$ in panel (c) and $%
\overline{\Pi}$ in panel (d), and projected operators $\Pi{A}%
^{c}\Pi+t(I-\Pi) $ in panel (e) and $\overline{\Pi}R^{c\mathrm{T}}T^{\mathrm{%
T}}ATR^{c}\overline{\Pi}+\overline {t}(I-\overline{\Pi})$ in panel (f). All
are square matrices with size $2925$.}
\label{fig:projections}
\end{figure}

\clearpage % do not let numerical examples get mixed with illustrations

\begin{figure}[tbp]
\begin{center}
\begin{tabular}{cc}
\includegraphics[width=60mm]{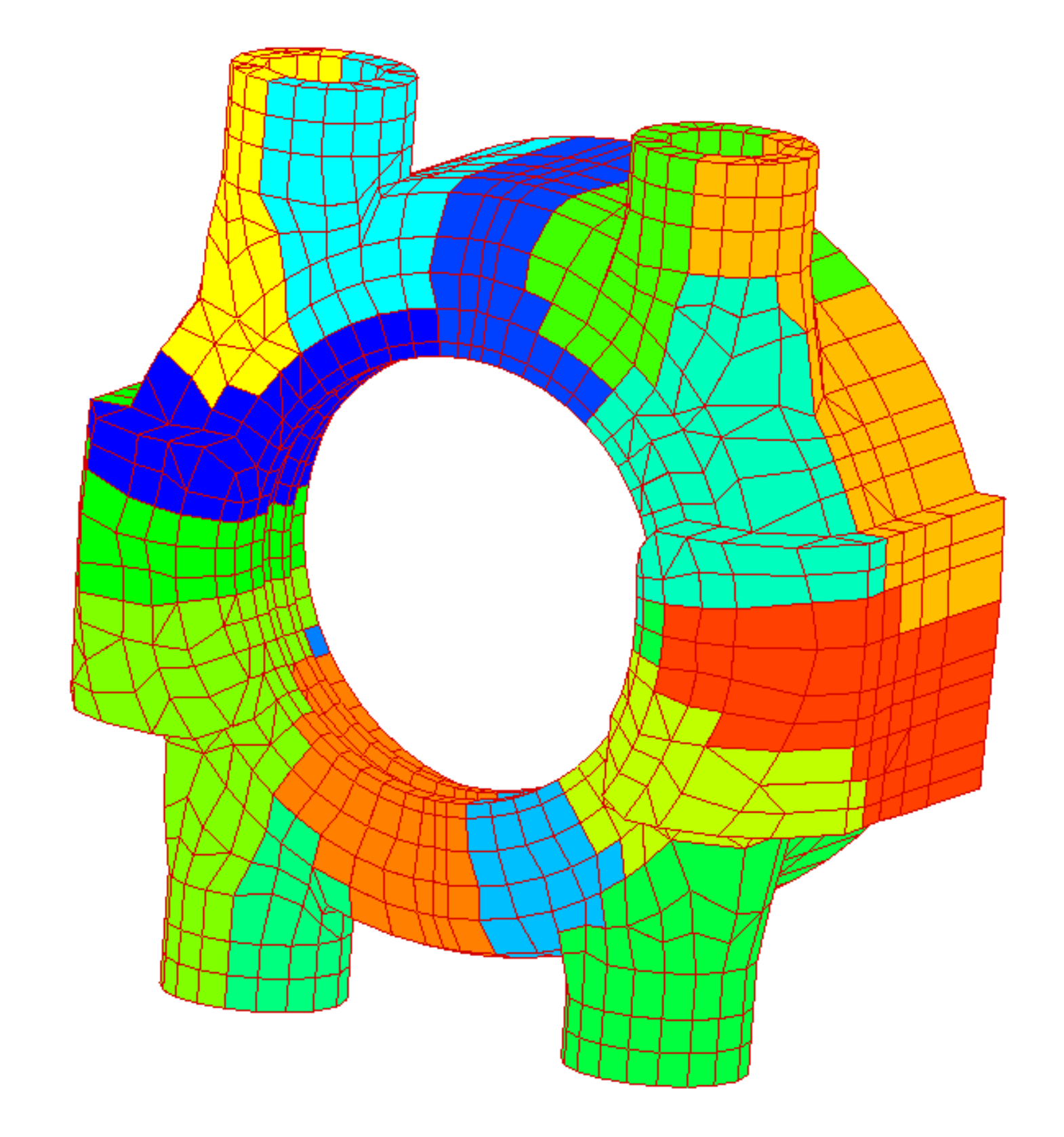} & %
\includegraphics[width=60mm]{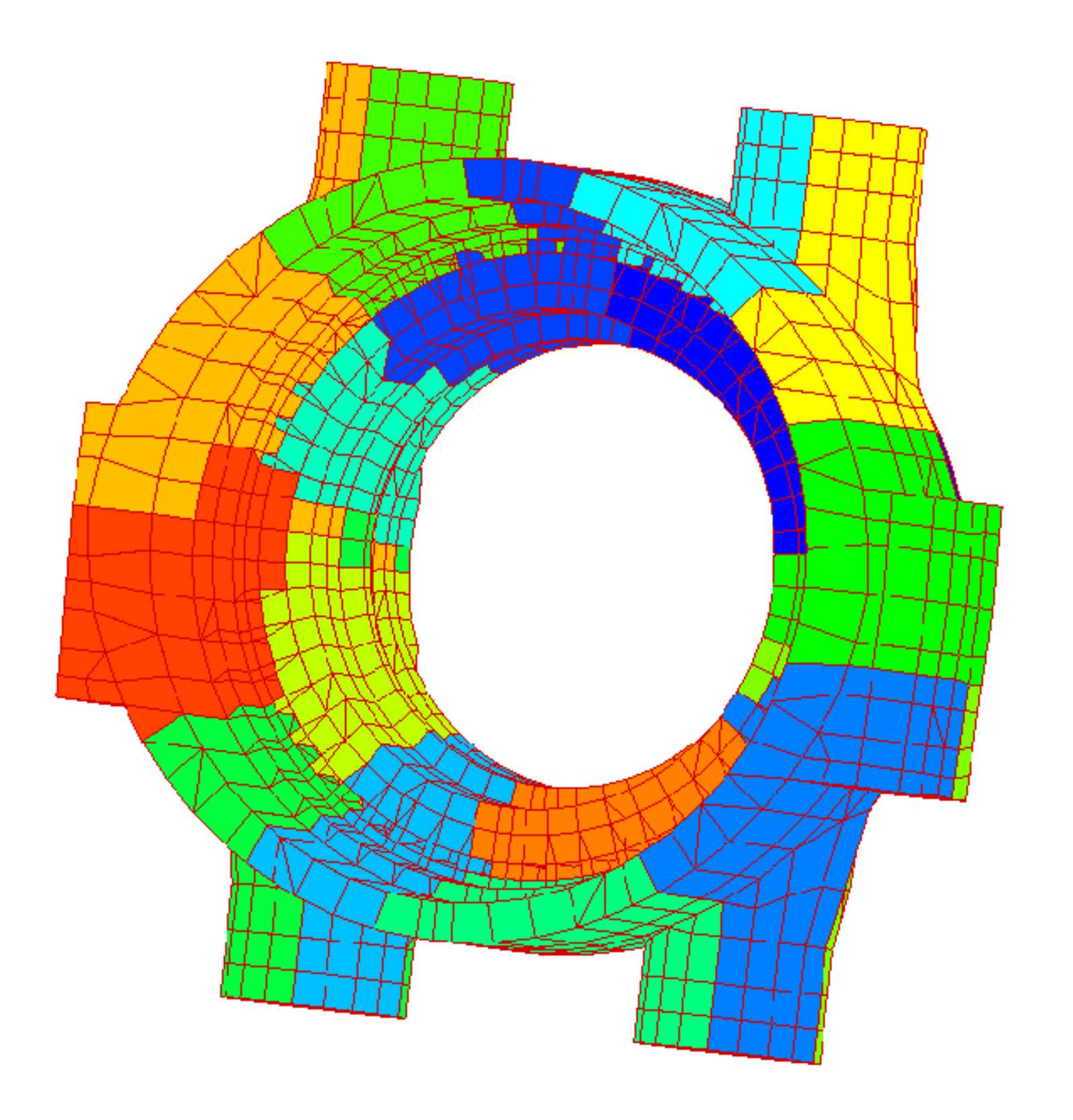} \\
\includegraphics[width=60mm]{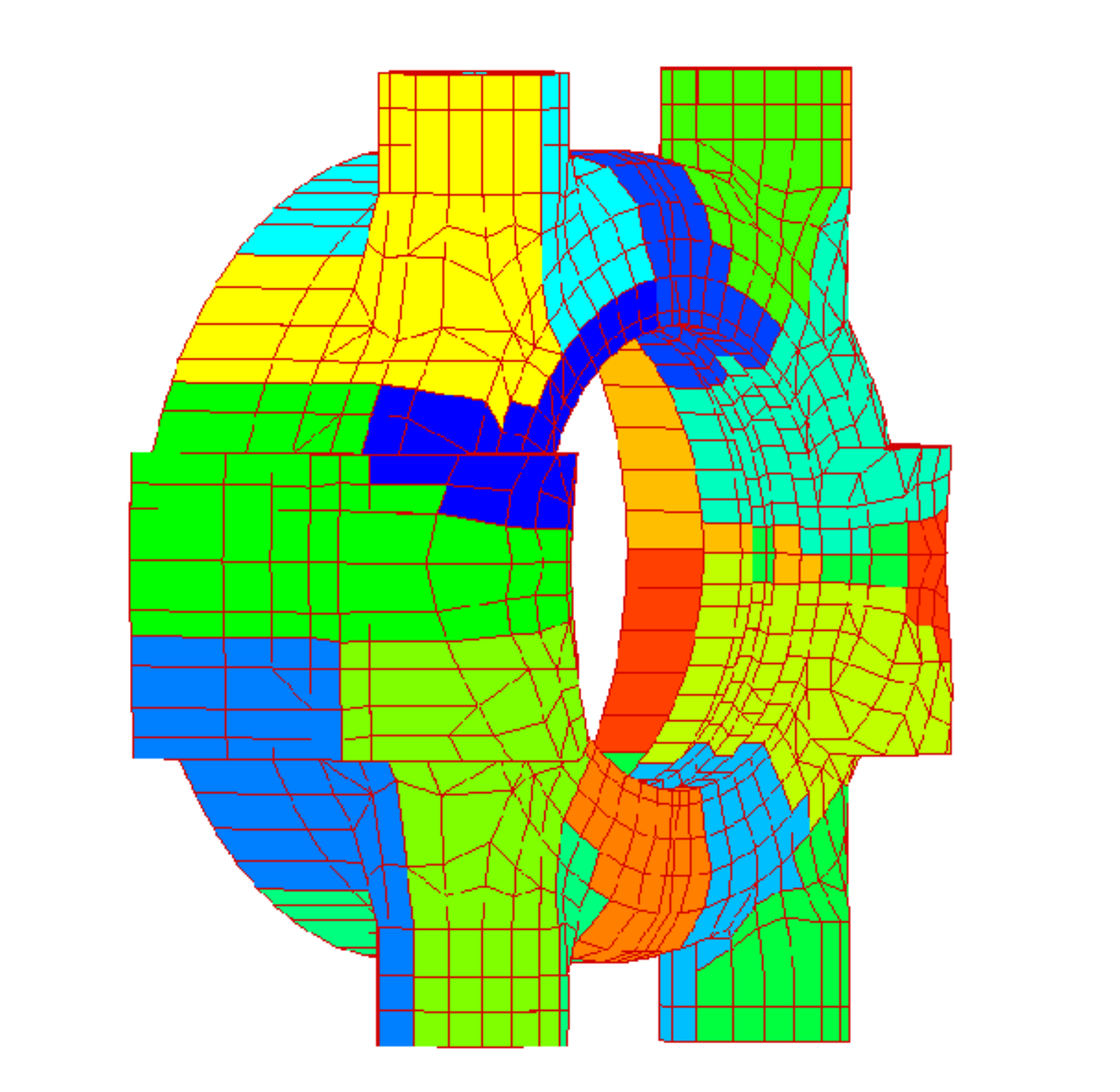} & %
\includegraphics[width=55mm]{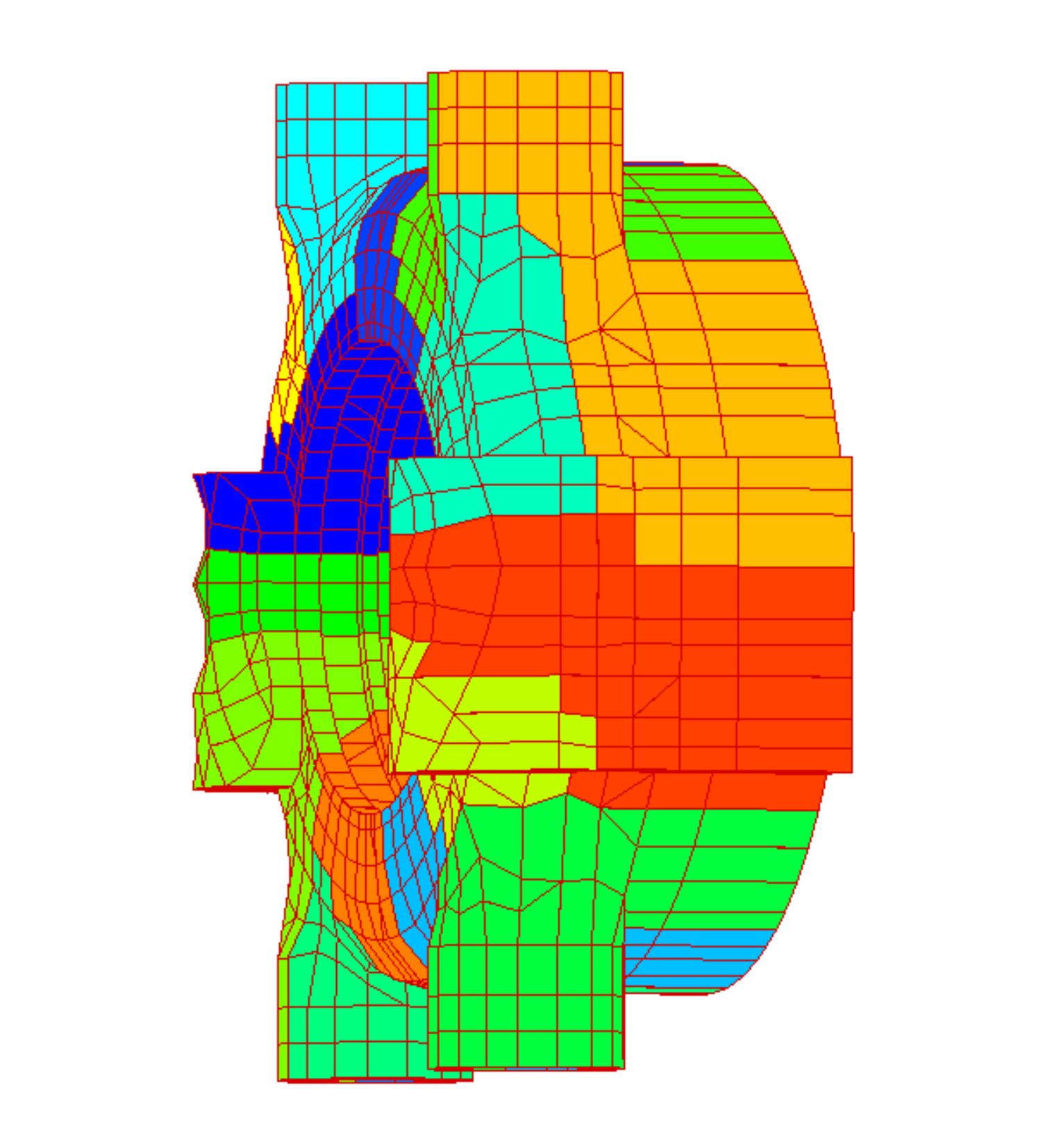}%
\end{tabular}%
\end{center}
\caption{Finite element discretization and substructuring of the nozzle box,
consisting of $40,254$ degrees of freedom, $16$ substructures, $37$
corners, $19$ edges, and $32$ faces.}
\label{fig:jettube}
\end{figure}

\begin{table}[tbp]
\centering
\begin{tabular}{c|r|r|r}
constraints & $Nc$ & $\kappa$ & $it$ \\ \hline
c & $0$ & NA & NA \\
c+e & $117$ & $1021.7$ & $103$ \\
c+e+f & $213$ & $40.3$ & $47$ \\
c+e+f (3eigv) & $213$ & $26.5$ & $40$%
\end{tabular}%
\caption{Results for the turbine nozzle box problem. The first three rows
correspond to non-adaptive approach with corner constraints and arithmetic
averages over edges/faces, and the last row corresponds to corner
constraints with arithmetic averages over edges and three weighted averages
over faces obtained from eigenvectors of the local generalized eigenvalue
problems, $Nc$ is number of constraints (rows in the matrix~$D$), $\protect%
\kappa$ is the approximate condition number estimate from the Lanczos sequence
in conjugate gradients, and $it$ is the number of iterations for relative
residual tolerance~$10^{-8}$.}
\label{tab:jettube-nonadaptive}
\end{table}

\begin{table}[ptb]
\centering
\begin{tabular}{r|r|r|r|r}
$\tau$ & $\widetilde{\omega}$ & $Nc$ & $\kappa$ & $it$ \\ \hline
% log:
%c+e  cond 1021.69
%50 &  49.772483  &  158 & 44.8781 &  48 \\
%20 &  19.824216 & 200 & 16.8938 & 36 \\
%10 & 9.964578 & 274  & 11.1714 & 27 \\
% 5  & 4.998205 & 408 & 8.8199 & 20
% orig here:
%$\infty${\small {(=c+e)}} & NA & $117$ & $1021.7$ & $103$ \\
%$50$ & $49.8$ & $158$ & $44.9$ & $48$ \\
%$20$ & $19.8$ & $200$ & $16.9$ & $36$ \\
%$10$ & $>10$ & $274$ & $11.2$ & $27$ \\
%$5$ & $>5$ & $408$ & $8.8$ & $20$%
$\infty${\small {(=c+e)}} & NA & $117$ & $1021.690$ & $103$ \\
$50$ & $49.772$  &  $158$ & $44.8781$ &  $48$ \\
$20$ &  $19.824$ & $200$ & $16.8938$ & $36$ \\
$10$ & $9.965$ & $274$  & $11.171$ & $27$ \\
 $5$  & $4.998$ & $408$ & $8.820$ & $20$
\end{tabular}%
\caption{Results for the turbine nozzle box problem using the adaptive
approach. $\protect\tau$ is the threshold, and $\widetilde{\protect\omega}$
is the condition number indicator from (\protect\ref{eq:cond-ind}). The
other headings are same as in Table~\protect\ref{tab:jettube-nonadaptive}.}
\label{tab:jettube-adaptive}
\end{table}

\begin{figure}[tbp]
\begin{center}
\includegraphics[width=9cm]{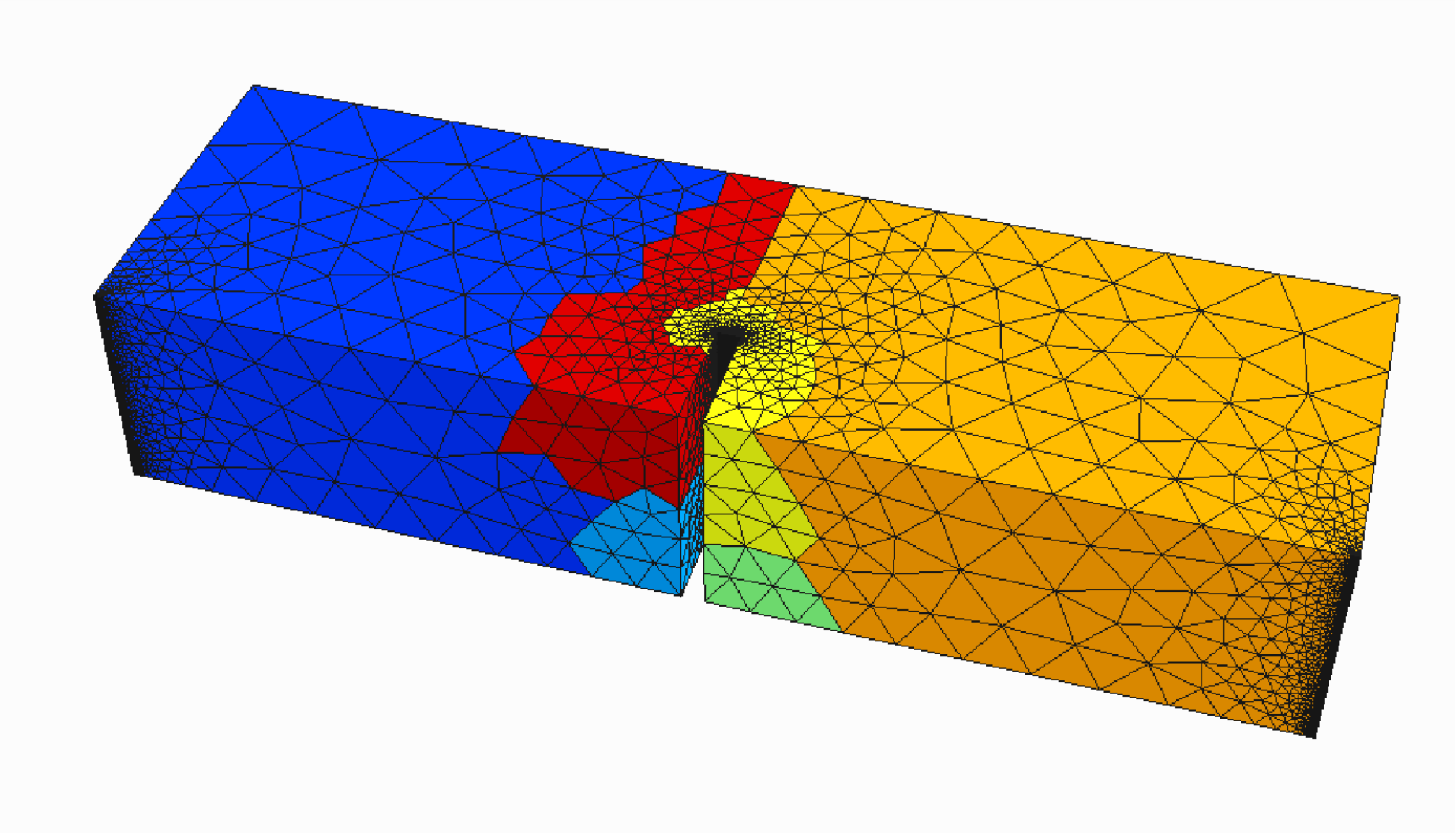} \\[0pt]
\begin{tabular}{cc}
\includegraphics[width=4cm]{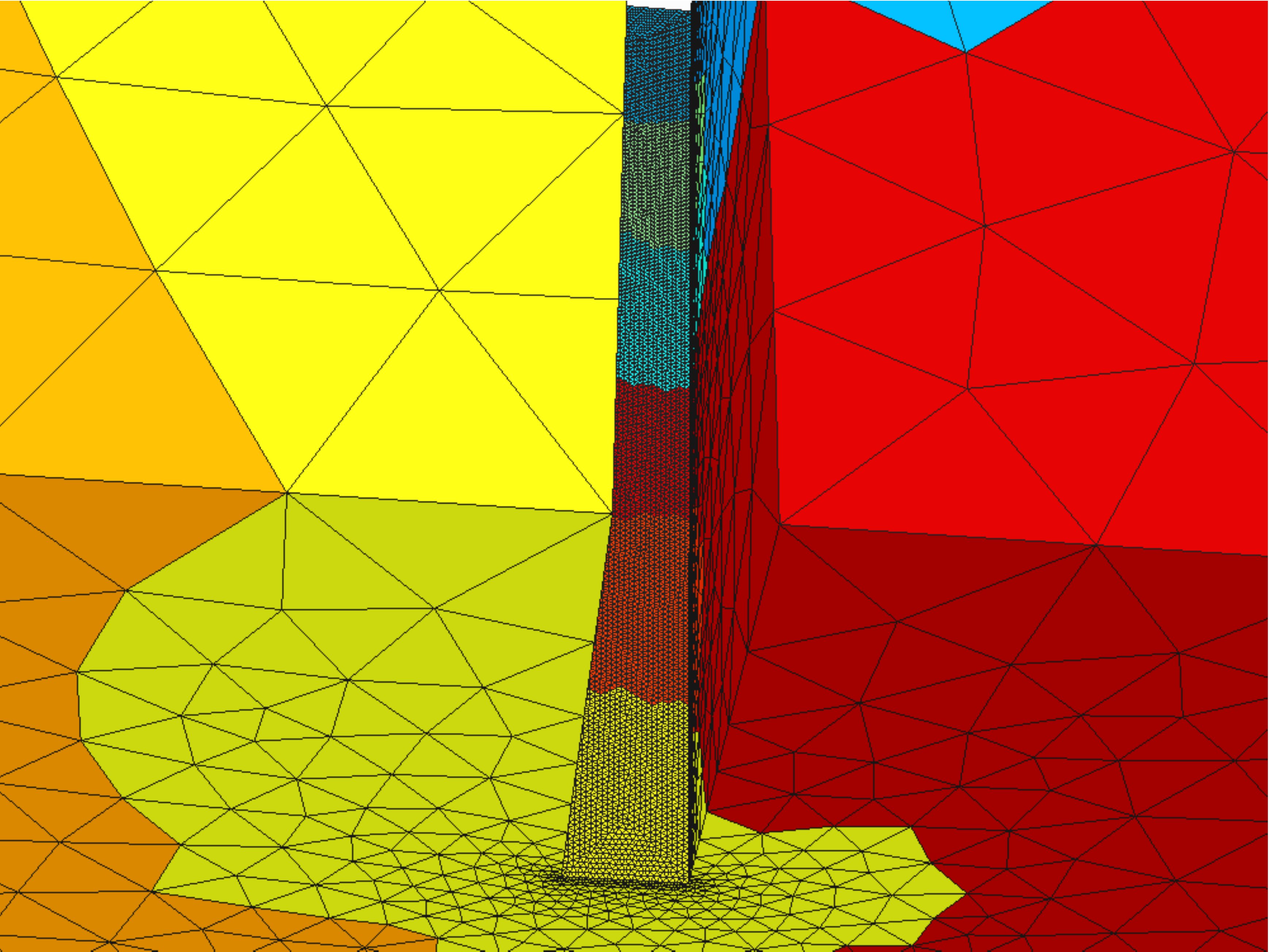} & %
\includegraphics[width=4cm]{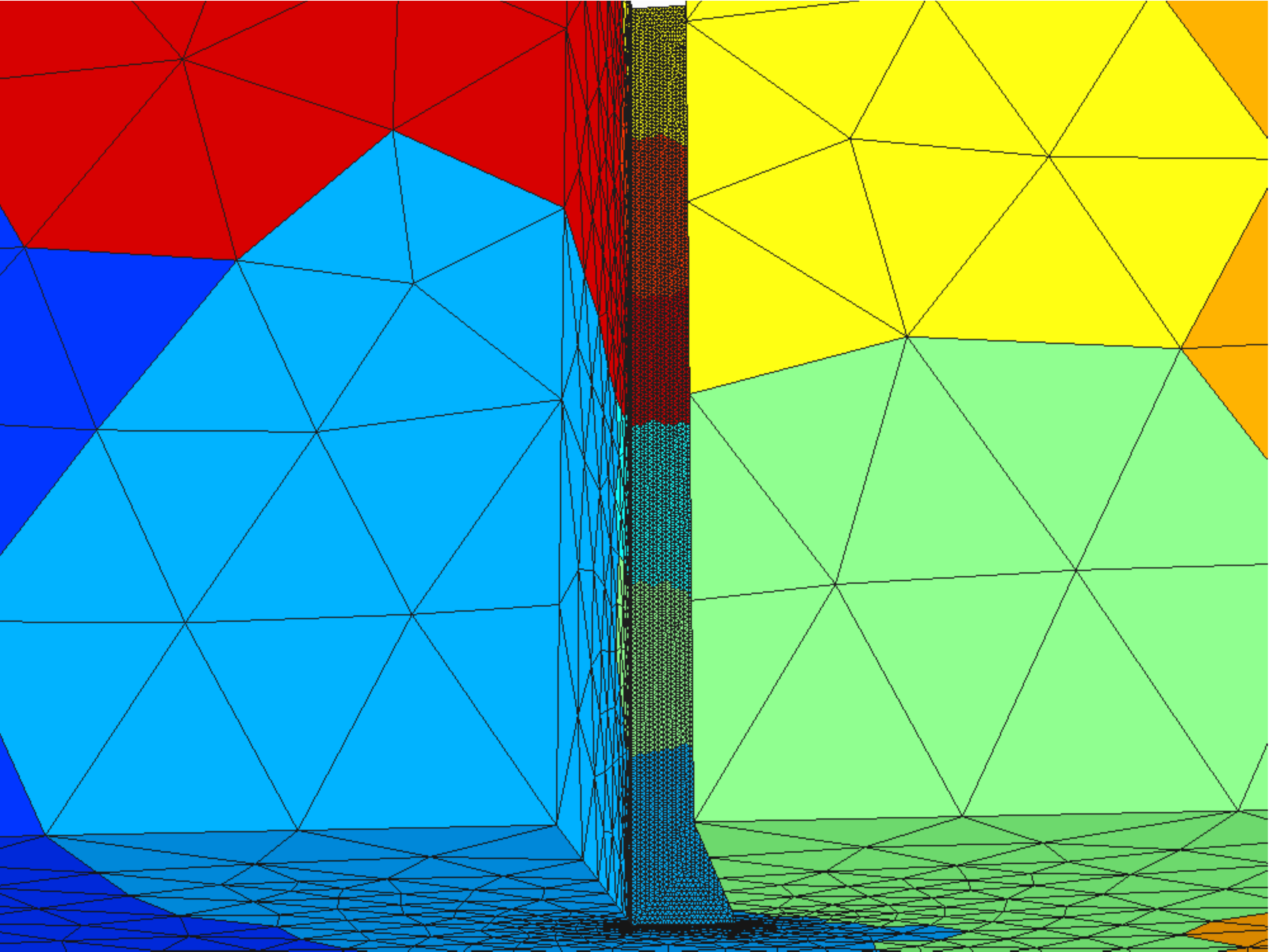}%
\end{tabular}%
\end{center}
\caption{Finite element discretization and substructuring of the beam with a
notch, consisting of $143,451$ degrees of freedom, $8$ substructures, $31$
corners, $18$ edges, and $19$ faces.}
\label{fig:beam}
\end{figure}

\begin{table}[tbp]
\centering
\begin{tabular}{c|r|r|r}
constraint & $Nc$ & $\kappa$ & $it$ \\ \hline
c & $0$ & $127.1$ & $79$ \\
c+e & $111$ & $101.0$ & $61$ \\
c+e+f & $168$ & $22.4$ & $32$ \\
c+e+f (3eigv) & $168$ & $13.2$ & $30$%
\end{tabular}%
\caption{Results for the beam with a notch. The headings are same as in
Table~\protect\ref{tab:jettube-nonadaptive}.}
\label{tab:beam-nonadaptive}
\end{table}

\begin{table}[tbp]
\centering
\begin{tabular}{r|r|r|r|r}
$\tau$ & $\widetilde{\omega}$ & $Nc$ & $\kappa$ & $it$ \\ \hline
% log:
%20 & 18.272438 & 119 & 19.0108 & 41\\
%10 & 9.986375 & 134 & 8.50475 & 31\\
%5 & 4.993925 & 163 & 4.65457 & 24\\
%3 & 2.998353 & 215 & 2.87316 & 18\\
%2 & 1.992992 & 340 & 2.14496 & 14
% orig here:
%$\infty${\small {(=c+e)}} & $149.0$ & $111$ & $101.0$ & $61$ \\
%$20$ & $18.3$ & $119$ & $19.0$ & $41$ \\
%$10$ & $>10$ & $134$ & $8.5$ & $31$ \\
%$5$ & $>5$ & $163$ & $4.7$ & $24$ \\
%$3$ & $>3$ & $215$ & $2.9$ & $18$ \\
%$2$ & $>2$ & $340$ & $2.1$ & $14$%
$\infty${\small {(=c+e)}} & $149.0$ & $111$ & $101.0$ & $61$ \\
$20$ & $18.272$ & $119$ & $19.011$ & $41$\\
$10$ & $9.986$ & $134$ & $8.505$ & $31$\\
$5$ & $4.994$ & $163$ & $4.655$ & $24$\\
$3$ & $2.998$ & $215$ & $2.873$ & $18$\\
$2$ & $1.993$ & $340$ & $2.145$ & $14$
\end{tabular}%
\caption{Results for the beam with a notch. The headings are same as in
Table~\protect\ref{tab:jettube-adaptive}.}
\label{tab:beam-adaptive}
\end{table}

\begin{figure}[tbp]
\begin{center}
\includegraphics[width=9cm]{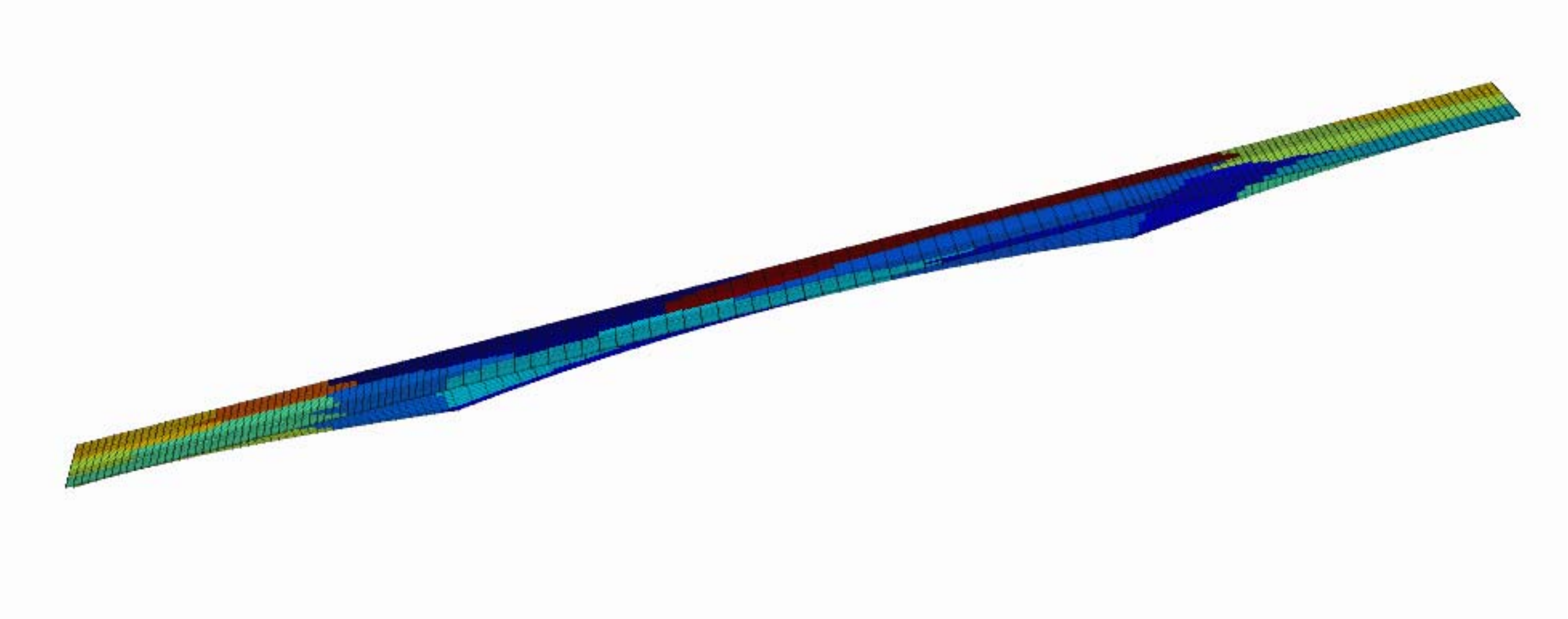} \\[0pt]
\includegraphics[width=9cm]{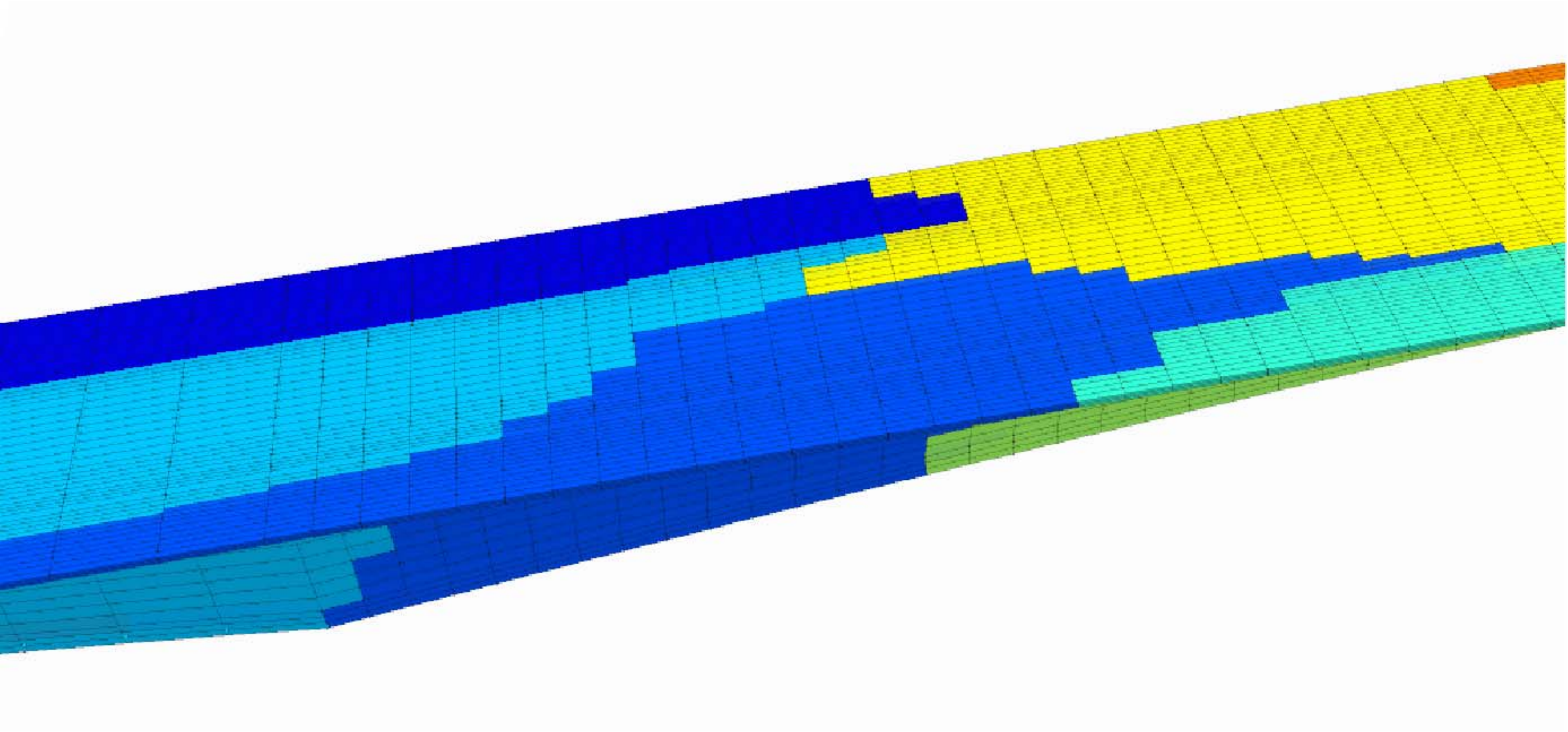} \\[0pt]
\begin{tabular}{cc}
\includegraphics[width=5cm]{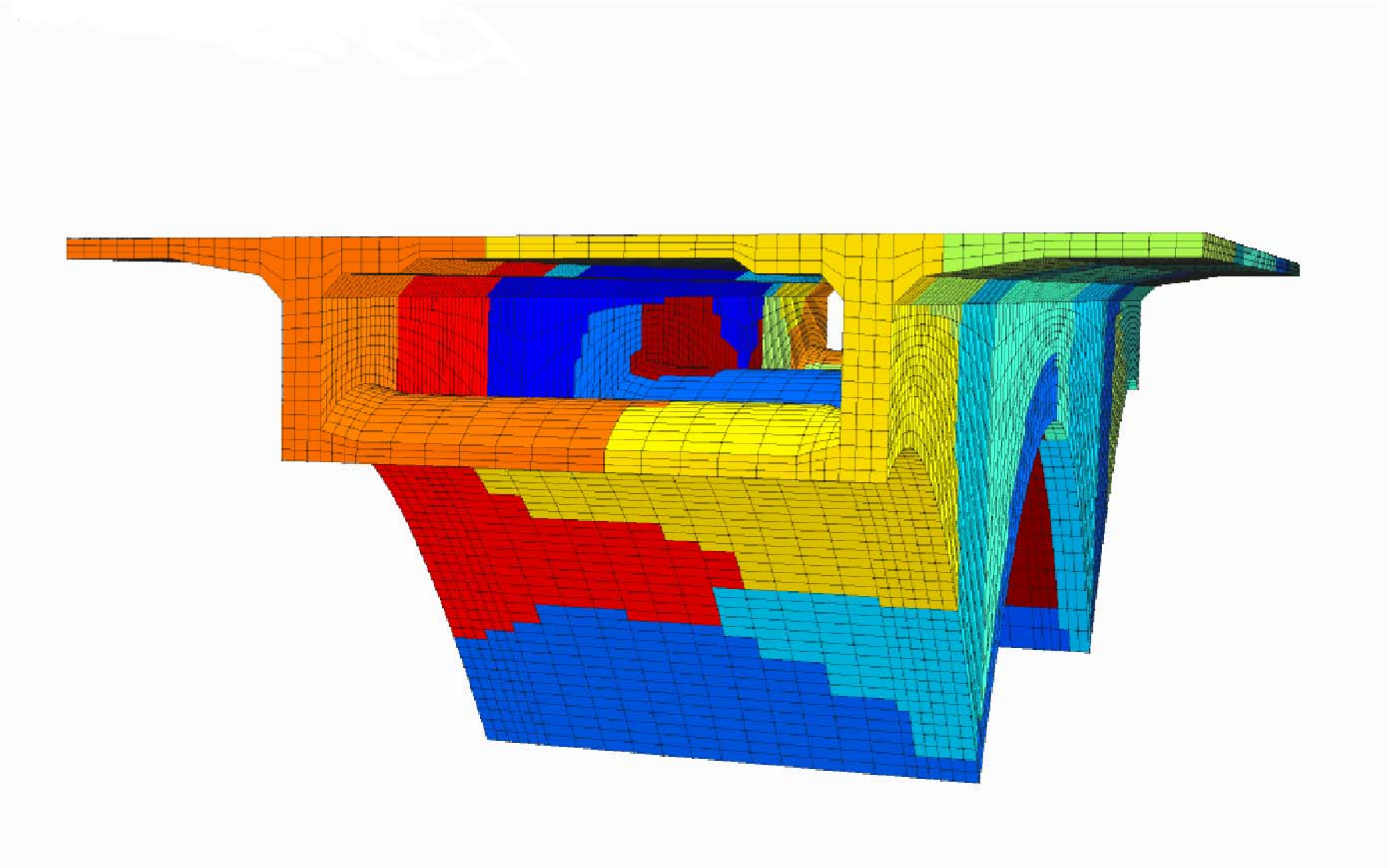} & %
\includegraphics[width=5cm]{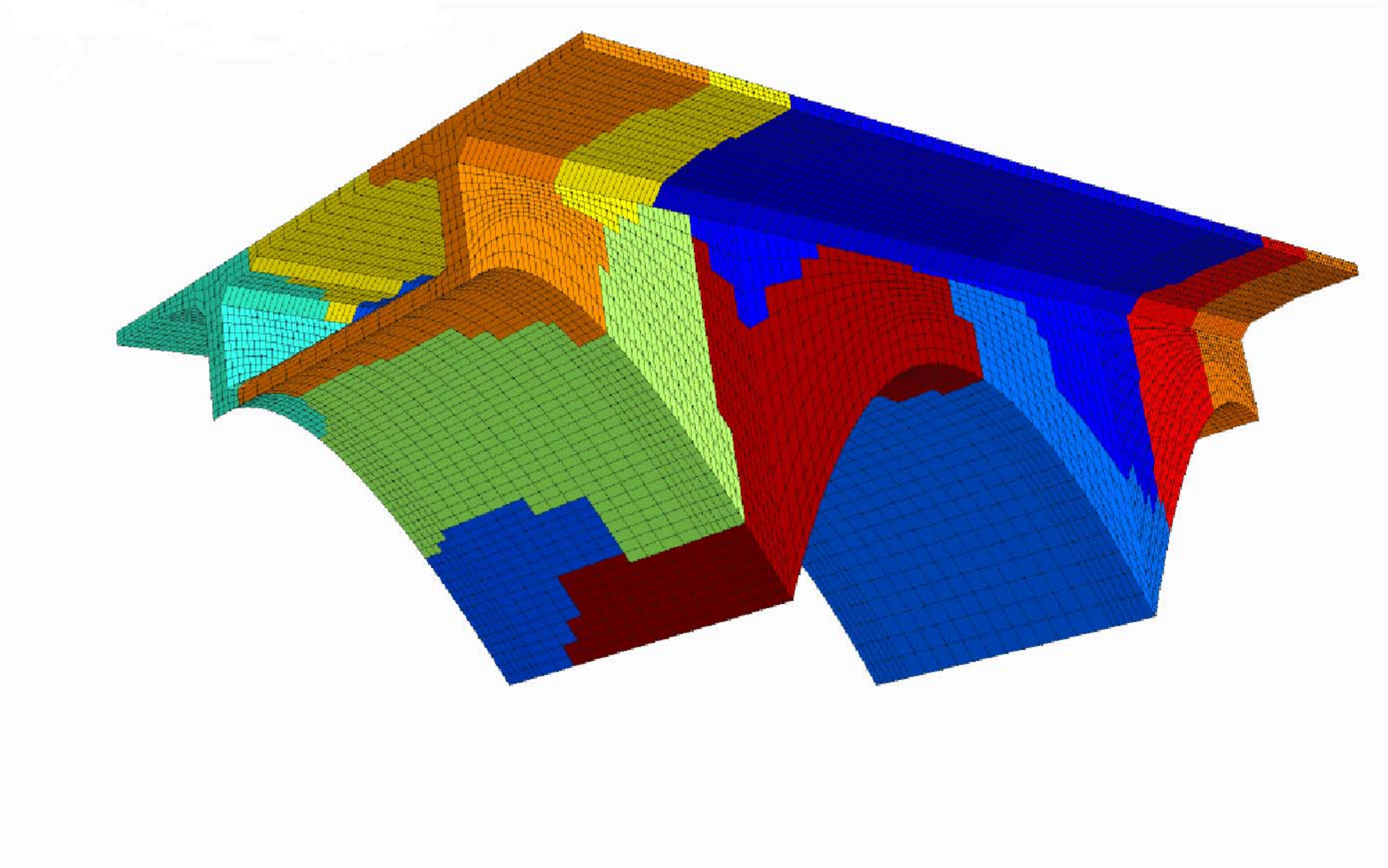}%
\end{tabular}%
\end{center}
\caption{Finite element discretization and substructuring of the bridge
construction, consisting of $157,356$ degrees of freedom, $16$
substructures, $250$ corners, $30$ edges, and $43$ faces.}
\label{fig:bridge}
\end{figure}

\begin{table}[tbp]
\centering
\begin{tabular}{c|r|r|r}
constraint & $Nc$ & $\kappa$ & $it$ \\ \hline
c & $0$ & $2301.4$ & $224$ \\
c+e & $180$ & $2252.4$ & $220$ \\
c+e+f & $309$ & $653.6$ & $160$ \\
c+e+f (3eigv) & $309$ & $177.8$ & $103$%
\end{tabular}%
\caption{Results for the bridge construction. The headings are same as in
Table~\protect\ref{tab:jettube-nonadaptive}.}
\label{tab:bridge-nonadaptive}
\end{table}

\begin{table}[tbp]
\centering
\begin{tabular}{r|r|r|r|r}
$\tau$ & $\widetilde{\omega}$ & $Nc$ & $\kappa$ & $it$ \\ \hline
% log:
%650  & 589.338050 & 185 & 483.517 & 169 \\
% 30   & 29.567531 & 292 & 28.739 & 64 \\
%    5  &  4.997419 & 655 & 5.01413 &  26 \\
 %   2  & 1.998227 & 1301 & 2.01094  & 14 \\
 % orig here:
%$\infty${\small {(=c+e)}} & $6500.5$ & $180$ & $2252.4$ & $220$ \\
%$650$ & $589.3$ & $185$ & $483.5$ & $169$ \\
%$30$ & $29.6$ & $292$ & $28.7$ & $64$ \\
%$5$ & $>5$ & $655$ & $5.0$ & $26$ \\
%$2$ & $>2$ & $1301$ & $2.0$ & $14$%
$\infty${\small {(=c+e)}} & $6500.5$ & $180$ & $2252.4$ & $220$ \\
$650$  & $589.338$ & $185$ & $483.517$ & $169$ \\
 $30$   & $29.568$ & $292$ & $28.739$ & $64$ \\
    $5$  &  $4.997$ & $655$ & $5.014$ &  $26$ \\
    $2$  & $1.998$ & $1301$ & $2.011$  & $14$ \\
\end{tabular}%
\caption{Results for the bridge construction. The headings are same as in
Table~\protect\ref{tab:jettube-adaptive}.}
\label{tab:bridge-adaptive}
\end{table}

\begin{figure}[tbp]
\begin{center}
\begin{tabular}{cc}
\includegraphics[width=5cm]{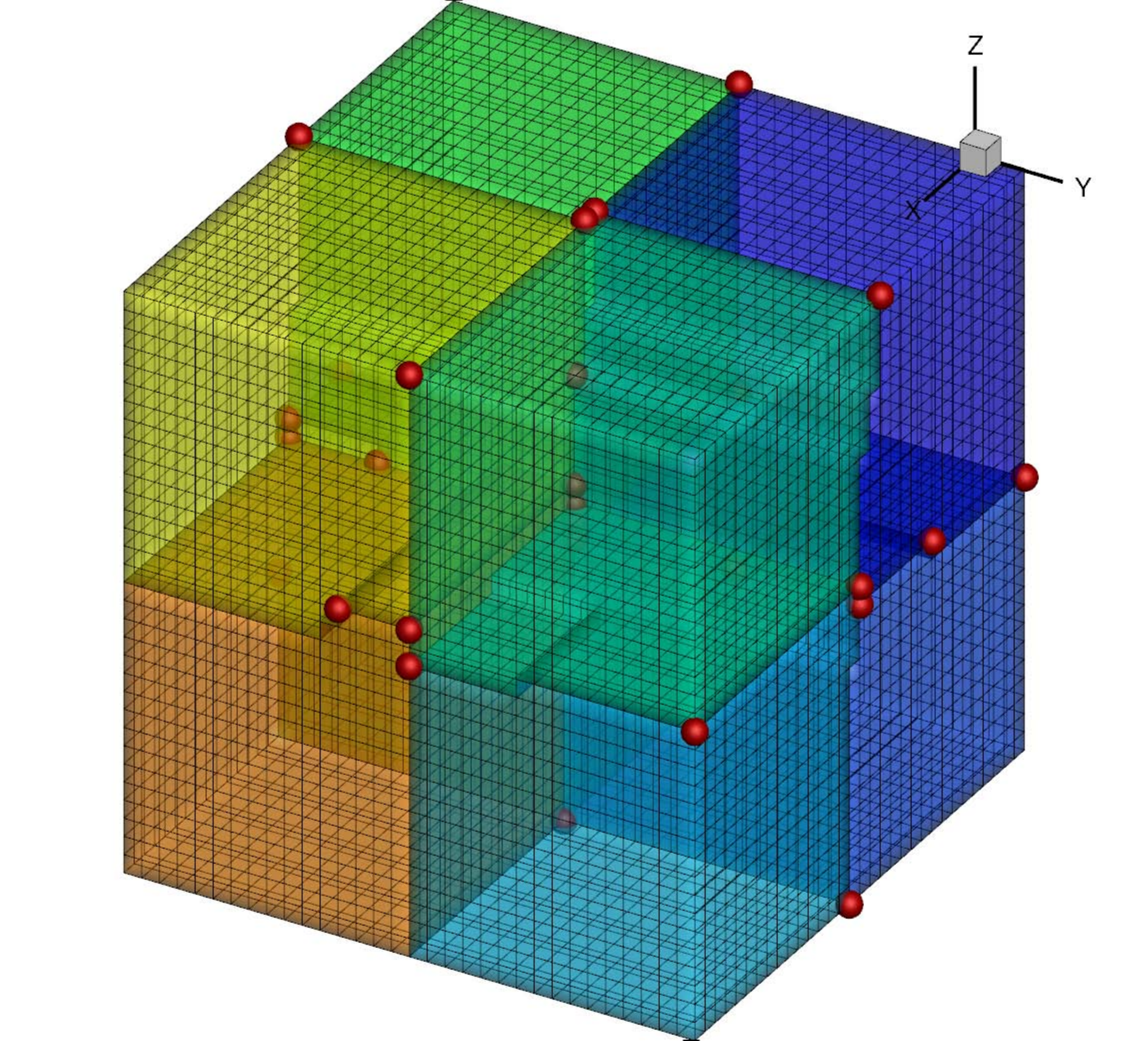} & %
\includegraphics[width=5cm]{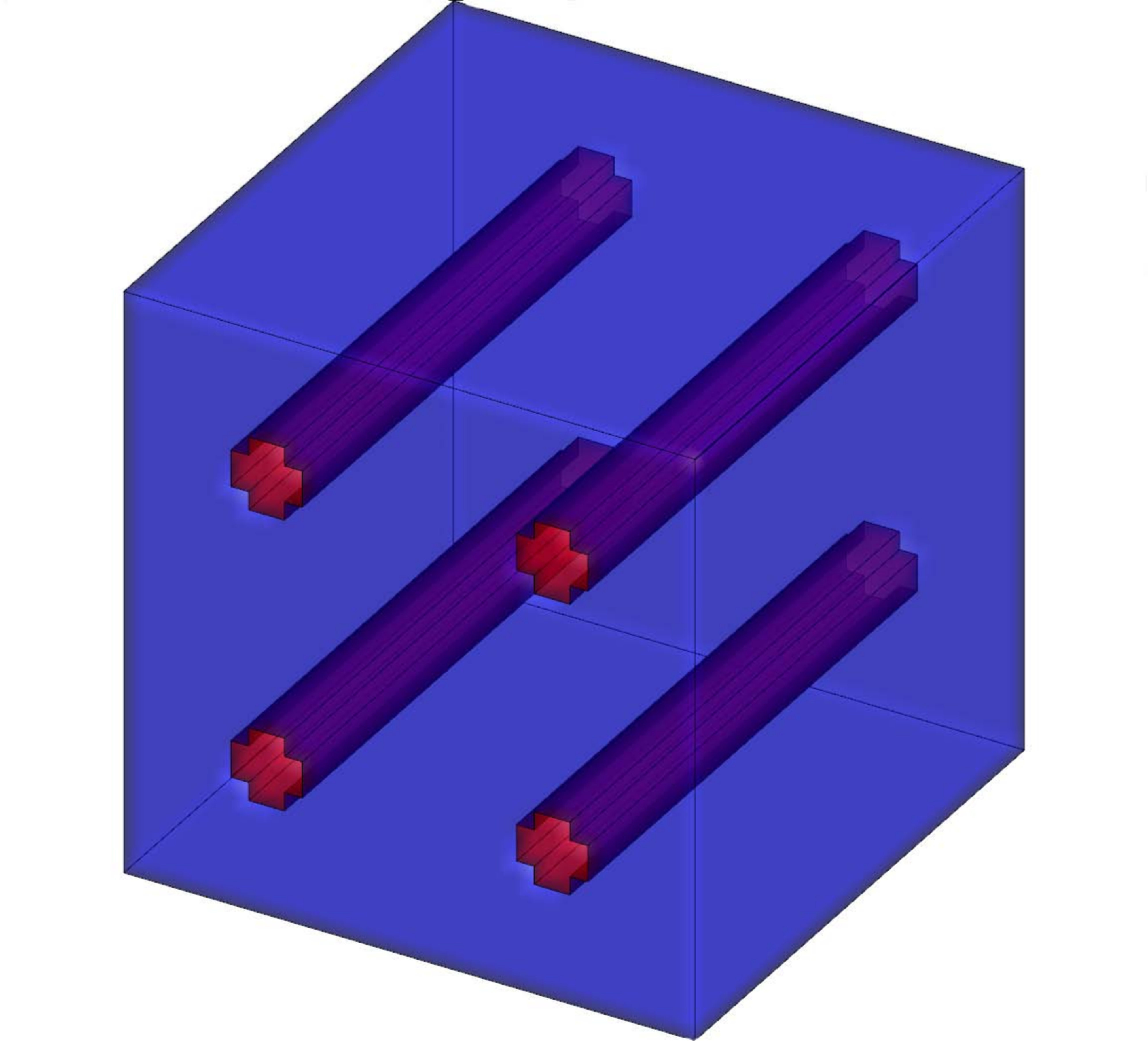}%
\end{tabular}%
\end{center}
\caption{Finite element discretization and substructuring of the cube with
jumps in coefficients, consisting of $107,811$ degrees of freedom, $8$
substructures, $30$ corners, $16$ edges, and $15$ faces.}
\label{fig:compo8}
\end{figure}

\begin{table}[tbp]
\centering
\begin{tabular}{c|r|r|r}
constraint & $Nc$ & $\kappa$ & $it$ \\ \hline
c & $0$ & $408,101.0$ & $326$ \\
c+e & $108$ & $125,390.0$ & $234$ \\
c+e+f & $153$ & $18,914.9$ & $169$ \\
c+e+f (3eigv) & $153$ & $1266.4$ & $71$%
\end{tabular}%
\caption{Results for the cube with jumps in coefficients. The headings are
same as in Table~\protect\ref{tab:jettube-nonadaptive}.}
\label{tab:compo8-nonadaptive}
\end{table}

\begin{table}[tbp]
\centering
\begin{tabular}{r|r|r|r|r}
$\tau$ & $\widetilde{\omega}$ & $Nc$ & $\kappa$ & $it$ \\ \hline
% log:
%10000 & 5145.293379  & 118 & 1843.35  & 90 \\
%1000  & 380.019403  &  129  & 173.562 & 35 \\
%   100 &   77.188668  & 132  &  6.4226 &  24   \\
%       5  &   4.989846 & 173 & 4.36239  & 20 \\
%       2  7  1.997703  & 451  &  2.80344   & 16
% orig here:
%$\infty${\small {(=c+e)}} & $270,000.0$ & $108$ & $125,390.0$ & $234$ \\
%$10,000$ & $5145.3$ & $118$ & $1843.4$ & $90$ \\
%$1000$ & $380.0$ & $129$ & $173.6$ & $35$ \\
%$100$ & $77.2$ & $132$ & $6.4$ & $24$ \\
%$5$ & $>5$ & $173$ & $4.4$ & $20$ \\
%$2$ & $>2$ & $451$ & $2.8$ & $16$%
$\infty${\small {(=c+e)}} & $270,000.0$ & $108$ & $125,390.0$ & $234$ \\
$10,000$ & $5145.293$  & $118$ & $1843.35$  & $90$ \\
 $1,000 $ & $380.019$  &  $129$  & $173.562$ & $35$ \\
   $ 100$ &  $ 77.189$  & $132$  & $ 6.423$ &  $24 $  \\
     $   5 $ &  $ 4.990$ & $173$ & $4.362$  & $20$ \\
      $  2$  &  $1.998$  & $451$  &  $2.803$   & $16$
\end{tabular}%
\caption{Results for the cube with jumps in coefficients. The headings are
same as in Table~\protect\ref{tab:jettube-adaptive}.}
\label{tab:compo8-adaptive}
\end{table}

\begin{figure}[tbp]
\begin{center}
\includegraphics[width=12cm]{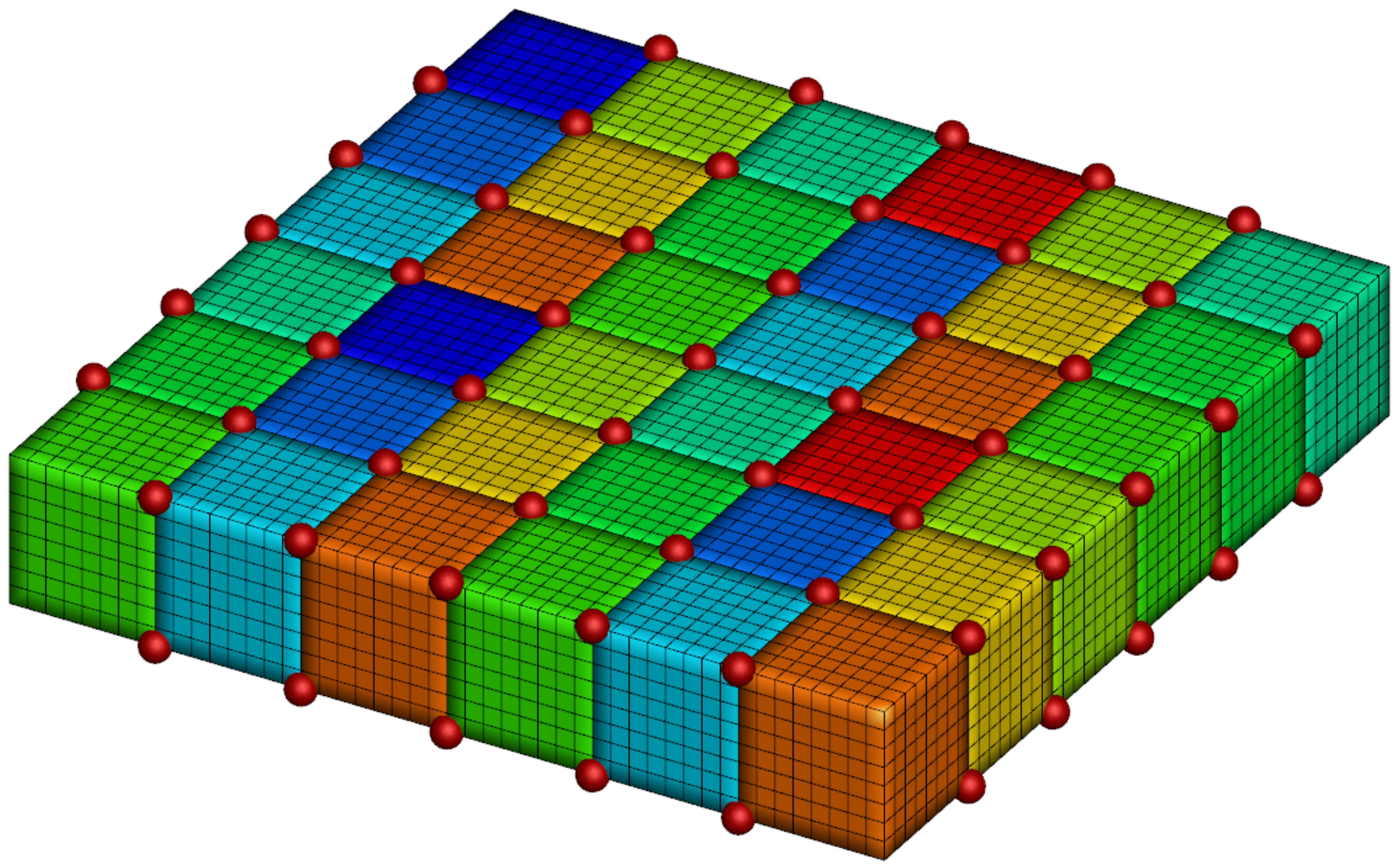}
\end{center}
\caption{Example of a configuration of planar cubes test problem with 36 
subdomains and $H/h=8$, red dots represent corners.}
\label{fig:planar_cubes}
\end{figure}

\begin{table}[tbp]
\centering
\begin{tabular}{c|c|c|c|c|c|c|c}
number of subdomains & 4 & 9 & 16 & 25 & 36 & 49 & 64 \\
degrees of freedom & 7803 & 16,875 & 29,403 & 45,387 & 64,827 & 87,723
& 114,075 \\ \hline
condition number est. & 28.3 & 38.0 & 42.2 & 44.4 & 45.7 & 46.5 & 47.1 \\
number of PCG iterations & 13 & 26 & 36 & 42 & 44 & 46 & 47 \\ \hline
analysis by MUMPS (sec) & 0.2 & 0.5 & 1 & 15 & 14 & 16 & 19 \\
factorization by MUMPS (sec) & 0.5 & 0.4 & 0.8 & 12 & 10 & 12 & 14 \\
PCG iterations (sec) & 0.8 & 3.6 & 13 & 613 & 524 & 579 & 643 \\ \hline
one PCG iteration (sec) & 0.06 & 0.14 & 0.4 & 15 & 12 & 13 & 14 \\ \hline
total wall time (sec) & 3 & 6 & 19 & 715 & 616 & 696 & 794%
\end{tabular}%
\caption{Weak scaling on planar cubes problem (e.g. Fig. \protect \ref{fig:planar_cubes}), 
corners only, $H/h=8$.}
\label{tab:scaling_cornersHh8}
\end{table}

\begin{figure}[tbp]
\begin{center}
\includegraphics[width=8cm,angle=-90]{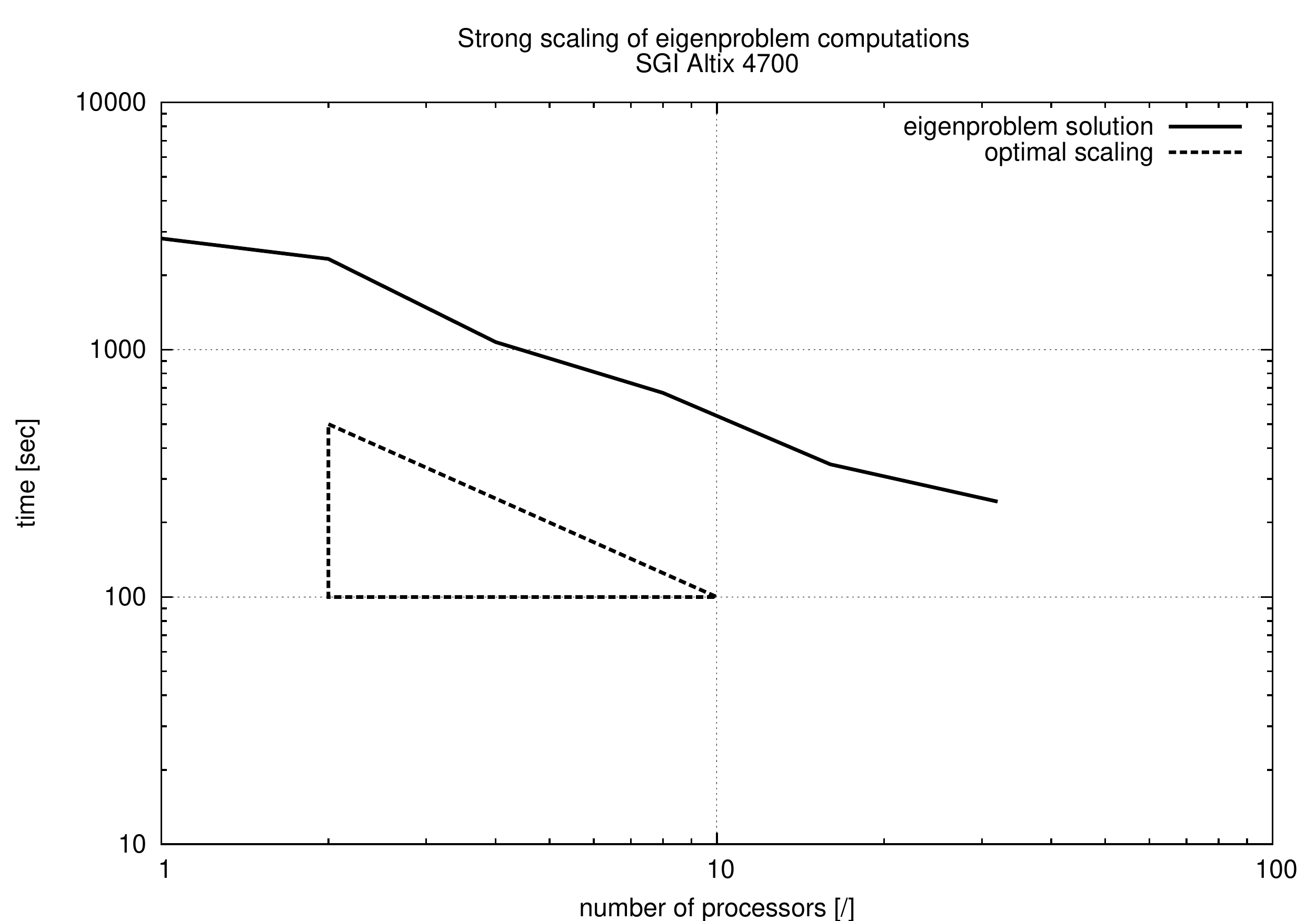}
\end{center}
\caption{Dependence of the computational time on the number of processors for solution of local eigenproblems, nozzle box problem, 16 subdomains, 30 eigenproblems.}
\label{fig:scaling_eigenproblems_select}
\end{figure}

\end{document}